\newtheorem{theorem}{Theorem}
\newtheorem{corollary}{Corollary}
\newtheorem{lemma}{Lemma}
\newtheorem{proposition}{Proposition}
\theoremstyle{definition}
\newtheorem{definition}{Definition}
\newtheorem{remark}{Remark}
\newtheorem{example}[theorem]{Example}
\theoremstyle{algorithm}
\newtheorem{algorithm}[theorem]{Algorithm}
\newcommand{\N}{\mathbb{N}}
\newcommand{\ri}{\mathrm{ri}^W}
\newcommand{\w}{\mathrm{w}}
\newcommand{\ev}{\mathrm{ev}}
\newcommand{\HP}{\mathrm{HP}}
\newcommand{\lcm}{\mathrm{lcm}}
\newcommand{\lt}{\mathrm{lt}}
\newcommand{\lm}{\mathrm{lm}}
\newcommand{\rem}{\mathrm{rem}}
\newcommand{\Supp}{\mathrm{Supp}}
\title[Hilbert quasi-polynomial for order domains and application to coding theory]
      {Hilbert quasi-polynomial for order domains and application to coding theory}
\author[Carla Mascia, Giancarlo Rinaldo, Massimiliano Sala]{}
\subjclass{Primary: 13P25, 11T71; Secondary: 12Y05}
 \keywords{Groebner basis, Hilbert polynomial, Hilbert quasi-polynomial, order domain, order domain code.}
\email{carla.mascia@unitn.it, giancarlo.rinaldo@unitn.it, maxsalacodes@gmail.com}
\thanks{This research was partially funded by the Italian Ministry of Education, Universities and Research, with the project PRIN 2015TW9LSR  "Group theory and applications".}
\begin{document}
\maketitle

\centerline{\scshape Carla Mascia, Giancarlo Rinaldo, Massimiliano Sala}
\medskip
{\footnotesize
 \centerline{Università degli Studi di Trento}
  \centerline{Trento, Italy}
   }



\medskip

 \centerline{(Communicated by Aim Sciences)}
 \medskip

\begin{abstract}
We present an application of Hilbert quasi-polynomials to order domains, allowing the effective check of the second order-domain condition in a direct way.
We also provide an improved algorithm for the computation of the related Hilbert quasi-polynomials. This allows to identify order domain codes more easily.
\end{abstract}

\section{Introduction}

Fundamental algebraic invariants of a standard-graded ring can be deduced from its Hilbert-Poincar\'e series and Hilbert polynomial. The Hilbert quasi-polynomial generalizes Hilbert polynomial from the standard grading case to more generalized weighted grading, but until \cite{CGC-alg-art-cabmasc16} no effective algorithms for its computation were known. 

Apart from its natural ideal-theoretical application, we believe that more practical applications can arise from its use, e.g. for the study of order domains and the related codes.
We consider order domains as pairs of a quotient ring $R/I$ and a generalized weighted degree ordering which satisfy some conditions that depend largely on the weights of monomials under the Hilbert staircase of the ideal $I$ of $R$.

An important research area in coding theory is that of algebraic geometric codes, which are known to achieve near-optimal performance since the seminal Goppa paper (\cite{CGC-cd-art-goppa2}). A class of algebraic geometric codes that have received
a lot of recent attention is formed by the so-called order domain codes (\cite{CGC-cd-art-GeilPell},\cite{CGC-cd-book-Geil08}). These codes are defined by evaluating a polynomial vector space at points of a variety which is definided starting from an order domain, and as such they form a subclass of the so-called affine variety codes (\cite{CGC-cd-art-lax}, \cite{CGC-cd-prep-manumaxchiara12}).  In \cite{CGC-cd-book-T.HPell}, H{\o}holdt, van Lint and Pellikaan introduced order domains for the first time. They showed how to deal with one-point geometric Goppa codes in the language of order domain theory. In \cite{CGC-cd-art-AndeGeil}, Andersen and Geil give an improved bound on the minimum distance of one-point geometric Goppa codes, an improved construction of one-point geometric Goppa codes and a generalization of the bound and the improved construction to algebraic structures of higher transcendence degrees. To derive these results, they consider first the problems in the most general set-up described by Miura in \cite{CGC-cd-art-Miura} and by Miura and Matsumoto \cite{matsumoto2000feng}.

In this paper we present our application of Hilbert quasi-polynomials to order domains, which consists in verifying the order domain conditions in a direct way, once the quasi-polynomial has been computed for a related quotient ring. This computation is effective thanks to our algorithm, that improves on that
of \cite{CGC-alg-art-cabmasc16} and can be specialized to the order domain case.

The remainder of this paper is organized as follows:
\begin{itemize}
\item[Section \ref{prel}:] here we provide some notation, preliminaries and known results on Hilbert functions, Hilbert quasi-polynomials, order domains and order domain codes;
\item[Section \ref{appl}:] in this section we present some results on Hilbert quasi-polynomials, which lead to improvements in their effective calculation, and our main results Corollary \ref{cor1} and Algorithm \ref{algo1}, which allow to decide effectively if the second order-domain condition is actually satisfied.
 \item[Section \ref{maximal}:] we show some examples of application of our results. In particular, the most important family of affine-variety codes is that of codes coming from maximal curves, since these codes have large length. In this section we specialize our previous results to this family, showing some actual practical cases that can be solved easily.
\item[Section \ref{concl}:] in this section we draw our conclusions and point at possible future improvements.
\end{itemize}

\section{Preliminaries} \label{prel}
In this section we fix some notation and recall some known results. We denote by $\mathbb{N}_+$ the set of positive integers, by $\mathbb{K}$ a field, by $R:=\mathbb{K}[x_1,\dots, x_n]$ the polynomial ring in $n$ variables over $\mathbb{K}$, and by $\mathcal{M} = \mathcal{M}(X)$ the set of all monomials in the variables $x_1, \dots, x_n$.
We assign a weight $w_i \in \mathbb{N}_+^r$ to each variable $x_i$, i.e. if $X^{\mathbf{\alpha}} = x_1^{\alpha_1}\cdots x_n^{\alpha_n} \in \mathcal{M}$
\[
\w(X^{\mathbf{\alpha}}) := \alpha_1 w_1  + \cdots + \alpha_n w_n 
\] 
Let $\prec_{\mathbb{N}_+^r}$ and $\prec_{\mathcal{M}}$ be monomial orderings on $\mathbb{N}_+^r$ and $\mathcal{M}$ respectively, and let $W:=[w_1, \dots, w_n] \in (\mathbb{N}_+^r)^n$ be the vector of the variable weights. The \textit{generalized weighted degree ordering $\prec_{W}$ defined from $W$, $\prec_{\mathbb{N}_+^r}$ and $\prec_{\mathcal{M}}$} is the ordering given by $X^{\mathbf{\alpha}} \prec_{W} X^{\mathbf{\beta}}$, with $X^{\mathbf{\alpha}}, X^{\mathbf{\beta}} \in \mathcal{M}$, if either 
\[
\w(X^{\mathbf{\alpha}}) \prec_{\mathbb{N}_+^r} \w(X^{\mathbf{\beta}}) \; \text{ or }
\]
\[
\w(X^{\mathbf{\alpha}}) = \w(X^{\mathbf{\beta}}) \; \text{ and } X^{\mathbf{\alpha}} \prec_{\mathcal{M}} X^{\mathbf{\beta}} \;
\]
Given $f \in R$ and $\prec_W$, $\lm(f)$ (resp. $\lt(f)$) stands for the leading monomial (resp. leading term) of $f$ w.r.t $\prec_W$. Let $I \subseteq R$ be an ideal of $R$, then we denote by $\bar{I} := \mathrm{in}_{\prec_W}(I)$ the \textit{initial ideal} of $I$, which is the ideal generated by $\{ \lt(f) \mid f \in I\}$, and by $\mathcal{N}_{\prec_W}(I)$ the \textit{Hilbert staircase} of $I$, which is the set of all monomials that are not leading monomials of any polynomial in $I$.
In the remainder of this paper, we suppose $w_i \in \mathbb{N}_+$, unless specified otherwise. When $W =[1, \dots, 1]$, the grading is called \textit{standard}. The pair $(R,\prec_{W})$ stands for the polynomial ring with the generalized weighted degree ordering $\prec_{W}$. A polynomial $f \in (R,\prec_{W})$ is called \textit{$W$-homogeneous} if all nonzero terms of $f$ have the same weight. An ideal $I \subseteq (R,\prec_{W})$ is called \textit{$W$-homogeneous} if it is generated by a set of $W$-homogeneous polynomials.

\subsection{Order domains}

In this section we are going to give the notion of order domain, which represents a relatively new tool in the study of algebraic geometric codes. We refer to \cite{CGC-cd-inbook-D1} and \cite{CGC-cd-art-GeilPell}.

Recall that an $\mathbb{K}$-algebra is a commutative ring with a unit that contains $\mathbb{K}$ as a unitary subring. The standard example of an $\mathbb{K}$-algebra is $R = \mathbb{K}[x_1,\dots, x_n]$.

\begin{definition}
Let $\Gamma$ be a semigroup and $\prec$ a well-ordering. An \textbf{order function} on an $\mathbb{K}$-algebra $R$ is a surjective function 
\[
\rho: R \rightarrow \Gamma \cup \{-\infty \}
\]
such that the following conditions hold
\begin{enumerate}[label={(O.\arabic*)}]
\item $\rho (f) = - \infty$ if and only if $f=0$
\item $\rho(af)=\rho(f)$ for all nonzero $a \in \mathbb{K}$ 
\item $\rho(f+g) \leq \max\{ \rho(f), \rho(g)\}$
\item If $\rho(f) < \rho(g)$ and $h \neq 0$, then $\rho(fh) < \rho(gh)$
\item If $f$ and $g$ are nonzero and $\rho(f) = \rho(g)$, then there exists a nonzero $a \in \mathbb{F}$ such that $\rho(f - ag) < \rho(g)$
\end{enumerate}
for all $f,g,h \in R$.
\end{definition}

\begin{definition}
Let $R$ be an $\mathbb{K}$-algebra, $(\Gamma, \prec)$ a well-order and $\rho: R \rightarrow \Gamma \cup \{-\infty\}$ an order function. Then $(R, \rho, \Gamma)$ is called an \textbf{order structure} and $R$ an \textbf{order domain} over $\mathbb{K}$.
\end{definition}

All order functions relevant in coding theory are actually weight functions.

\begin{definition}
Let $R$ be an $\mathbb{K}$-algebra. A \textbf{weight function} on $R$ is an order function on $R$ that satisfies furthermore 
\begin{enumerate}
\item[(O.6)] $\rho (fg) = \rho(f) + \rho(g)$
\end{enumerate}
for all $f,g \in R$. Here $- \infty + n = -\infty$ for all $n \in \N$.

\end{definition}

Order domains and weight functions represent helpful tools to construct a large class of algebraic geometric codes. Actually, one can only use Groebner basis theoretical methods for constructing order domains and weight functions, instead of their formal definition, as showed by the following theorem.

\begin{theorem}\label{theoorderdomain}\textnormal{\cite{CGC-cd-inbook-D1}}
Let $I$ be an ideal in $R=\mathbb{K}[x_1, \dots, x_n]$ and assume $\mathcal{G}$ is a Groebner basis for $I$ with respect to a generalized weighted degree ordering $\prec_{W}$, with $W \subseteq \mathbb{N}_+^r$. Suppose that 
\begin{itemize}
\item[(C1)] any $g \in \mathcal{G}$ has exactly two monomials of highest weight in its support;
\item[(C2)] no two monomials in the staircase $\mathcal{N}_{\prec_W}(I)$ of $I$ are of the same weight.
\end{itemize}
Write $\Gamma = \{\w(M) \mid M \in \mathcal{N}_{\prec_W}(I)\} \subseteq \N_+^r$. For $f \in \mathbb{K}[x_1, \dots, x_n]/I$, denote by $F$ the unique remainder of any polynomial in $f$ after division with $\mathcal{G}$. Then $R/I$ is an order domain with a weight function $\rho : R/I \rightarrow \Gamma \cup \{-\infty\}$ defined by $\rho(0) = -\infty$ and 
\begin{center}
$\rho(f)=\max_{\prec_{\N_+^r}} \{\w(M) \mid M \in \Supp(F)\}$ for $f \neq 0$.
\end{center}
\end{theorem}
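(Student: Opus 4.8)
The plan is to verify directly that $\rho$ satisfies the five order-function axioms (O.1)--(O.5) together with the weight-function axiom (O.6); once these hold, $(R/I,\rho,\Gamma)$ is an order structure and $R/I$ an order domain. The ordering on the value semigroup is $\prec_{\mathbb{N}_+^r}$ restricted to $\Gamma$, which is a well-order compatible with addition because every monomial order on $\mathbb{N}^r$ is such. Throughout I write $F,G,H$ for the unique normal forms of $f,g,fg$ modulo $\mathcal{G}$; since $\mathcal{G}$ is a Gr\"obner basis these depend only on the classes in $R/I$, so $\rho$ is well defined, and $\Supp(F)\subseteq \mathcal{N}_{\prec_W}(I)$ always. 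By (C2) the weights of distinct staircase monomials are distinct, so the maximum in the definition of $\rho(f)$ is attained at a single monomial $M_f$; write $\gamma_f=\w(M_f)=\rho(f)$. Surjectivity of $\rho$ onto $\Gamma$ is immediate since each staircase monomial $M$ is its own normal form, so $\rho(\bar M)=\w(M)$.

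The easy axioms come first. (O.1) holds since $F=0$ iff $f\in I$ iff $\bar f=0$; (O.2) since the normal form of $af$ is $aF$ for $a\in\mathbb{K}\setminus\{0\}$, with the same support; (O.3) since normal-form reduction is $\mathbb{K}$-linear, so the normal form of $f+g$ is $F+G$ and $\Supp(F+G)\subseteq\Supp(F)\cup\Supp(G)$, whence $\rho(f+g)\preceq\max\{\rho(f),\rho(g)\}$. For (O.5) suppose $\rho(f)=\rho(g)=\gamma$. By (C2) there is a unique staircase monomial of weight $\gamma$; since both $M_f$ and $M_g$ lie in the staircase with weight $\gamma$, we get $M_f=M_g=:M$, and $M$ occurs in $F$ and in $G$ with nonzero coefficients $c,d$. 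Taking $a=c/d$, the normal form $F-aG$ of $f-ag$ has the $M$-term cancelled and all remaining terms of weight $\prec\gamma$, so $\rho(f-ag)\prec\gamma=\rho(g)$.

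The heart of the argument, and the main obstacle, is (O.6); the key tool is the behaviour of weights under reduction, which is exactly what (C1) controls. If a monomial $N$ is reducible by $g\in\mathcal{G}$, then $\lm(g)\mid N$ and, by (C1), $g$ has precisely one further monomial $M_2$ of the same top weight as $\lm(g)$ and all other monomials of strictly smaller weight; hence subtracting the appropriate multiple of $g$ replaces $N$ by the single monomial $(N/\lm(g))M_2$ of weight $\w(N)$ together with terms of weight $\prec\w(N)$. Consequently reduction never raises the maximal weight, and applied to a single monomial it keeps the top-weight stratum equal to a single (nonzero) monomial. I then record $FG\equiv fg\pmod I$, so $H$ is also the normal form of $FG$. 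Using strict monotonicity of $\prec_{\mathbb{N}_+^r}$ and that $M_f,N_g$ are the unique top-weight monomials of $F,G$ (by (C2)), the top-weight form of $FG$ is the single monomial $c_{M_f}d_{N_g}M_fN_g$ of weight $\gamma_f+\gamma_g$, all other products having strictly smaller weight.

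It then remains to show this top weight survives reduction. Applying the reduction step repeatedly to $M_fN_g$, the weight-$(\gamma_f+\gamma_g)$ part stays a single nonzero monomial that strictly decreases in $\prec_W$ at each step (because $M_2\prec_W\lm(g)$ at equal weight), while all spawned lower-weight terms remain of weight $\prec\gamma_f+\gamma_g$ and, by weight non-increase, never climb back. Since $\prec_W$ is a well-order the process terminates at a staircase monomial of weight $\gamma_f+\gamma_g$, which therefore appears with nonzero coefficient in $H$; the lower-weight cross terms of $FG$ reduce to weight $\prec\gamma_f+\gamma_g$ and cannot cancel it. Hence $\rho(fg)=\gamma_f+\gamma_g=\rho(f)+\rho(g)$, proving (O.6). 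Finally, (O.4) follows formally from (O.6) and strict monotonicity of $\prec_{\mathbb{N}_+^r}$, and specializing (O.6) to $f,g$ staircase monomials shows $\Gamma$ is closed under addition, i.e. a genuine semigroup (and that $R/I$ is a domain), completing the verification.
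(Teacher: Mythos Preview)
The paper does not give its own proof of this theorem: it is quoted as a known result from \cite{CGC-cd-inbook-D1} in the preliminaries, with no argument supplied, so there is nothing in the paper to compare against. Your direct verification of axioms (O.1)--(O.6) is the standard way this result is established in the order-domain literature, and your argument is correct. The crucial step---that condition (C1) forces each reduction by a basis element $g$ to replace the current top-weight monomial by exactly one other monomial of the same weight (namely $(N/\lm(g))M_2$), strictly smaller in $\prec_W$, plus strictly lower-weight terms---is exactly the mechanism that makes (O.6) go through, and you handle it cleanly. One cosmetic slip: you switch notation from $M_g$ to $N_g$ midway; keep it consistent.
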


If $I$ and $\prec_W$ satisfy the hypotheses of Theorem \ref{theoorderdomain}, we call the pair $(R/I, \prec_W)$ an order domain. 

\begin{example}\label{hermitianpoly}
Let $q$ be a prime power and consider the so-called Hermitian polynomial $x^{q+1} - y^q -y$ and let $I$ be the ideal $I=(x^{q+1} - y^q -y) \subseteq \mathbb{F}_{q^2}[x,y]$, where $\mathbb{F}_{q^2}$ stands for a filed with $q^2$ elements and $q$ is a prime power. We consider $\prec_W$ given by $w(x)=q$, $w(y)=q+1$ and $x \prec_{lex} y$. Then $\mathcal{G} = \{x^{q+1} - y^q - y\}$ is a Groebner basis for $I$, and it is not difficult to verify that $I$ and  $\prec_W$ satisfy the order domain conditions $(C1)$ and $(C2)$, then $(R/I, \prec_w)$ is an order domain.
\end{example}

One main advantage of Theorem \ref{theoorderdomain} is that it allows to construct in a very easy way order domains of higher transcendence degree.

\subsection{Affine-variety codes and order domain codes}
In this section, we present a class of codes, called affine-variety codes and defined in \cite{CGC-cd-art-lax}, obtained by evaluating functions in the coordinate ring of an affine variety on all the $\mathbb{K}$-rational points, i.e. points whose coordinates are in $\mathbb{K}$, of the variety. Let $I$ be any ideal in $R:=\mathbb{K}[x_1, \dots, x_n]$, where $\mathbb{K}:= \mathbb{F}_q$ is the field with $q$ elements. Put
\[
I_q := I + (x_1^q - x_1, x_2^q - x_2, \dots, x_n^q - x_n )
\]

The points of the affine variety $\mathcal{V}(I_q)$ defined by $I_q$ are the $\mathbb{F}_q$-rational points of the affine variety defined by $I$. Let $\mathcal{V}(I_q) = \{ P_1, P_2, \dots, P_N\}$. Since $I_q$ contains the polynomials $x_i^q - x_i$ for all $i= 1,\dots, n$, it is a 0-dimensional and radical ideal. It follows that the coordinate ring
\[
R_q := R / I_q 
\]
of $\mathcal{V}(I_q)$ is an Artinian ring of length $N$ and that there is an isomorphism $\ev$ of $\mathbb{F}_q$-vector spaces
\[
\ev : R_q \ \rightarrow \left(\mathbb{F}_q\right)^N 
\qquad \bar{f} \ \mapsto (f(P_1), \dots, f(P_N))
\]
where $f$ is a representative in $R$ of the residue class $\bar{f}$. Let $L \subseteq R_q$ be an $\mathbb{F}_q$-vector subspace of $R_q$ of dimension $k$. The image $\ev(L)$ of $L$ under the evaluation map $\ev$ is called the \textbf{affine-variety code} and we denote it by $C(I,L)$. The dual code $C^{\perp}(I,L)$ is the orthogonal complement of $C(I,L)$ with respect to the usual inner product on $\mathbb{F}_q^N$. \\

\begin{theorem} \cite{CGC-cd-art-lax}
Every linear code may be represented as an affine-variety code.
\end{theorem}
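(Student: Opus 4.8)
The plan is to exploit a fact already recorded in the preliminaries: for \emph{any} ideal $I \subseteq R$, once $\mathcal{V}(I_q) = \{P_1, \dots, P_N\}$ consists of $N$ distinct points, the ideal $I_q$ is zero-dimensional and radical, and the evaluation map $\ev : R_q \to (\mathbb{F}_q)^N$ is an isomorphism of $\mathbb{F}_q$-vector spaces. The key observation is that surjectivity of $\ev$ comes for free from this isomorphism, so every subspace $C \subseteq (\mathbb{F}_q)^N$ is automatically the image of the subspace $L := \ev^{-1}(C) \subseteq R_q$, with $\dim_{\mathbb{F}_q} L = \dim_{\mathbb{F}_q} C$. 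Thus the entire content of the theorem reduces to producing, for a given linear code $C$ of length $N$, an ambient polynomial ring and an ideal $I$ whose associated variety $\mathcal{V}(I_q)$ consists of exactly $N$ distinct points.

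First I would fix the number of variables $n$ to be any integer with $q^n \geq N$ (for instance $n = \lceil \log_q N \rceil$), so that $(\mathbb{F}_q)^n$ contains at least $N$ points. I would then choose $N$ pairwise distinct points $P_1, \dots, P_N \in (\mathbb{F}_q)^n$ and let $I := I(\{P_1, \dots, P_N\})$ be their vanishing ideal in $R = \mathbb{F}_q[x_1, \dots, x_n]$. Because each $P_i$ is $\mathbb{F}_q$-rational, it already satisfies $x_j^q - x_j = 0$ for every $j$, so passing to $I_q = I + (x_1^q - x_1, \dots, x_n^q - x_n)$ removes none of these points; conversely, adjoining the field equations restricts the variety to its $\mathbb{F}_q$-rational locus. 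Hence $\mathcal{V}(I_q) = \{P_1, \dots, P_N\}$ exactly, and $R_q = R/I_q$ has length $N$.

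With this data in place the conclusion is immediate: $\ev : R_q \to (\mathbb{F}_q)^N$ is an isomorphism, so setting $L := \ev^{-1}(C)$ gives $C(I,L) = \ev(L) = C$ by construction, with $\dim L = \dim C = k$. Since $C$ was an arbitrary linear code, this exhibits it as an affine-variety code.

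I do not expect a genuine obstacle here, because the one nontrivial ingredient—that $\ev$ is a vector-space isomorphism when $\mathcal{V}(I_q)$ is a set of $N$ distinct $\mathbb{F}_q$-rational points—has already been granted in the preliminaries; at bottom it rests on the CRT-style decomposition $R_q \cong \prod_{i=1}^N \mathbb{F}_q$ coming from $I_q$ being radical and zero-dimensional. The only step demanding a little care is verifying that $\mathcal{V}(I_q)$ contains no spurious $\mathbb{F}_q$-rational points beyond the chosen $P_i$, and this is exactly what taking $I$ to be the \emph{full} vanishing ideal of the points guarantees, so that the $\mathbb{F}_q$-rational locus of $\mathcal{V}(I)$ is precisely $\{P_1, \dots, P_N\}$.
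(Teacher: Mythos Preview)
Your argument is correct: choosing $n$ with $q^n \ge N$, taking $I$ to be the vanishing ideal of $N$ distinct $\mathbb{F}_q$-rational points, and then pulling back $C$ along the isomorphism $\ev:R_q\to(\mathbb{F}_q)^N$ is exactly the standard construction, and the only potentially delicate step (that $\mathcal{V}(I_q)$ is precisely the chosen $N$ points, so that $\ev$ is an isomorphism) is handled as you say.

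As for comparison with the paper: there is nothing to compare. The paper does not supply its own proof of this theorem; it merely states it and attributes it to Fitzgerald and Lax, so your write-up actually provides more detail than the paper does at this point.
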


Let $\mathcal{N}_{\prec_W}(I)$ be the Hilbert staircase of $I$. If $\Supp(b_i) \subseteq \mathcal{N}_{\prec_W}(I_q)$, for all $i = 1,\dots,k$, and  $\lm(b_1) \prec_W \lm(b_2) \prec_W \cdots \prec_W \lm(b_k)$, then the basis $B$ is called \textit{well-behaving} and we define $\mathcal{L}(L) = \{\lm(b_1), \dots, \lm(b_k)\}$. \\
Let $\mathcal{G}$ be a Groebner basis for $I_q$. An ordered pair of monomials $(m_1, m_2)$, with $m_1, m_2$ in $\mathcal{N}_{\prec_W}(I_q)$, is said to be \textit{one-way-well-behaving (OWB)} if for any $f$ such that $\lm(f) = m_1$ and $\Supp(f) \subseteq \mathcal{N}_{\prec_W}(I_q)$, we have
\[
\lm(fm_2 \; \rem \; \mathcal{G}) = \lm(m_1m_2 \; \rem \; \mathcal{G})
\]
where the notation "$f \; \rem \; \mathcal{G}$" stands for the remainder of $f$ modulo $\mathcal{G}$.

\begin{theorem}\label{mindistaffinevarietycodes}
For any monomial ordering $\prec$, the minimum distance of $C(I,L)$ is at least
\[
\min_{p \in \mathcal{L}(L)} \mid \{ s \in \mathcal{N}_{\prec}(I_q) \ \mid \ \exists h \in \mathcal{N}_{\prec}(I_q) \ \text{s.t} \ (p,h) \ \text{is OWB}, \ \lm(ph \; {\rem} \; \mathcal{G})= s \} \mid
\]
and the minimum distance of $C(I,L)^{\perp}$ is at least 
\[
\min_{s \in \mathcal{N}_{\prec}(I_q) \setminus \mathcal{L}(L)} \mid \{p \in \mathcal{N}_{\prec}(I_q) \mid \exists h \in \mathcal{N}_{\prec}(I_q) \ \text{s.t.} \ (p,h) \ \text{is OWB}, \ \lm(ph \; \rem \; \mathcal{G})= s \} \mid
\]
\end{theorem}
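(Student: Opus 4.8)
The plan is to reduce both minimum-distance bounds to a single rank inequality for a ``syndrome matrix'' and then to extract a large nonsingular submatrix from the one-way-well-behaving (OWB) structure. For $c \in \mathbb{F}_q^N$ I would define the symmetric matrix $A_c$, with rows and columns indexed by the staircase monomials $m \in \mathcal{N}_{\prec}(I_q)$, by $A_c[m_1,m_2] := \sum_{i=1}^N c_i\, m_1(P_i)\, m_2(P_i)$. Writing $V$ for the $N\times N$ matrix $V[m,i] = m(P_i)$ and $D = \mathrm{diag}(c_1,\dots,c_N)$, one has $A_c = V D V^{\top}$, so $\mathrm{rank}(A_c) \le \mathrm{rank}(D) = \mathrm{wt}(c)$. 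Since the $\ev(m)$, $m\in\mathcal{N}_{\prec}(I_q)$, form a basis of $\mathbb{F}_q^N$, both $V$ and the Gram matrix $G = VV^{\top}$, $G[m_1,m_2] = \langle \ev(m_1),\ev(m_2)\rangle$, are nonsingular. The whole proof then consists in bounding $\mathrm{rank}(A_c)$ from below by the relevant count of OWB targets; here the key identity is $A_c[m_1,m_2] = \langle c, \ev(m_1 m_2 \rem \mathcal{G})\rangle$, so the entries are governed by the syndromes $s_m := \langle c, \ev(m)\rangle$.

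For the primary code I would take a nonzero $c = \ev(f)$ with $f\in L$. Expanding $f$ in a well-behaving basis $b_1,\dots,b_k$ (whose leading monomials are strictly $\prec$-increasing and supported on the staircase, so the top term cannot cancel) shows $\lm(f) = p$ for some $p \in \mathcal{L}(L)$. For each $s$ in the set $\{s : \exists h,\ (p,h)\ \text{OWB},\ \lm(ph\rem\mathcal{G}) = s\}$ I fix such an $h = h_s$; the OWB property applied to $f$ (which has $\lm(f)=p$ and support in the staircase) gives $\lm(f h_s \rem\mathcal{G}) = \lm(p h_s \rem\mathcal{G}) = s$. Writing $g_s := f h_s \rem\mathcal{G} = \sum_l \delta^{(s)}_l M_l$, a direct computation yields that the column $A_c[\,\cdot\,,h_s]$ equals $G\,\delta^{(s)}$. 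Because $\lm(g_s)=s$, the vectors $\delta^{(s)}$ have pairwise distinct leading monomials and are linearly independent; as $G$ is nonsingular the columns $G\delta^{(s)}$ are independent too, so $\mathrm{rank}(A_c)$ is at least the size of this set. Combining with $\mathrm{rank}(A_c)\le\mathrm{wt}(c)$ and minimizing over $p\in\mathcal{L}(L)$ proves the first bound.

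For the dual code I would take a nonzero $c\in C(I,L)^{\perp}$ and set $\sigma := \min_{\prec}\{m\in\mathcal{N}_{\prec}(I_q) : s_m\neq 0\}$, which exists because the $\ev(m)$ span $\mathbb{F}_q^N$. Normalizing each $b_j$ to have leading coefficient $1$ and using $\langle c,\ev(b_j)\rangle = 0$ together with the minimality of $\sigma$ forces $\sigma\notin\mathcal{L}(L)$. Now for each $p$ in $P(\sigma) := \{p : \exists h,\ (p,h)\ \text{OWB},\ \lm(ph\rem\mathcal{G})=\sigma\}$ fix $h_p$ with $(p,h_p)$ OWB and $\lm(p h_p\rem\mathcal{G})=\sigma$, and consider the submatrix of $A_c$ with rows $p$ and columns $h_p$, both listed in increasing order $p_1\prec\cdots\prec p_r$. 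Each diagonal entry equals $\gamma\, s_\sigma$ with $\gamma\neq0$ the OWB-guaranteed coefficient of $\sigma$, hence is nonzero. The crucial step is that the entries strictly above the diagonal vanish: if $p\prec p'$, then applying the OWB property of $(p',h_{p'})$ to the polynomials $p' + \lambda p$ for every nonzero scalar $\lambda$ forces the coefficient of $\sigma$ in $p\, h_{p'}\rem\mathcal{G}$ to be $0$ (any nonzero value would let a suitable $\lambda$ cancel the $\sigma$-term and push the leading monomial below $\sigma$). Hence $\lm(p\,h_{p'}\rem\mathcal{G})\prec\sigma$ and $A_c[p,h_{p'}] = 0$, since all syndromes below $\sigma$ vanish. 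The submatrix is thus lower-triangular with nonzero diagonal, so $\mathrm{rank}(A_c)\ge |P(\sigma)|$; minimizing over $s\in\mathcal{N}_{\prec}(I_q)\setminus\mathcal{L}(L)$ yields the second bound.

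The routine ingredients are the inequality $\mathrm{rank}(A_c)\le\mathrm{wt}(c)$ and the nonsingularity of $G$ and $V$, both immediate from $\ev$ being an isomorphism. I expect the main obstacle to be the dual off-diagonal vanishing: one must use the full force of the OWB definition, namely its quantification over all $f$ with $\lm(f)=m_1$, through the free-scalar cancellation argument above. This is precisely what rules out collisions among the chosen $h_p$ and what upgrades ``nonzero diagonal'' to genuine nonsingularity. The primary-code independence, by contrast, is the softer half, requiring only that the distinct leading monomials of the $g_s$ survive multiplication by the nonsingular $G$.
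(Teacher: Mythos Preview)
The paper does not prove this theorem: it is quoted in Section~\ref{prel} as a known preliminary result (in the spirit of the Feng--Rao/Andersen--Geil bounds, cf.\ \cite{CGC-cd-art-AndeGeil}, \cite{CGC-cd-book-Geil08}) and is used only as input for the subsequent discussion of order-domain codes. So there is no ``paper's own proof'' to compare against.

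That said, your proposal is a correct and essentially complete reconstruction of the standard argument behind such bounds. The two halves are handled exactly as in the literature: for $C(I,L)$ you exhibit, via OWB applied to the actual codeword representative $f$ with $\lm(f)=p$, a family of columns of $A_c$ of the form $G\delta^{(s)}$ with pairwise distinct leading monomials, and invoke the nonsingularity of $G=VV^{\top}$; for $C(I,L)^{\perp}$ you pick the minimal nonzero syndrome $\sigma$, show $\sigma\notin\mathcal{L}(L)$, and extract a lower-triangular submatrix with nonzero diagonal. Your ``free-scalar cancellation'' step is the right use of the universal quantifier in the OWB definition: applying it to $p'+\lambda p$ for all $\lambda\in\mathbb{F}_q^{\ast}$ forces first that every monomial of $p\,h_{p'}\ \rem\ \mathcal{G}$ is $\preceq\sigma$ (otherwise some $\lambda$ would push the leading monomial above $\sigma$), and then that its $\sigma$-coefficient vanishes (otherwise $\lambda=-a/b$ would push it below). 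The remark that this simultaneously forces the $h_p$ to be pairwise distinct is accurate and worth keeping explicit. The only cosmetic point is that the nonsingularity of $G$ over $\mathbb{F}_q$ is not a Gram-type positivity fact but simply $G=VV^{\top}$ with $V$ invertible; you say as much at the end, so this is fine.
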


\smallskip 

We are going to define the order domain codes, thus we can translate the results on their minimum distance given in Theorem \ref{mindistaffinevarietycodes} into the language of semigroup.

\begin{definition}
Let $(R/I,\prec_W)$ be an order domain and $L \subseteq R_q$, the affine-variety code $C(I,L)$ is called an \textbf{order domain code}.
\end{definition}

\begin{example}\label{hermitiancode}
Let $I = (x^{q+1} + y^q +y) \subseteq \mathbb{F}_{q^2}[x,y]$ and $\prec_W$ be as in Example \ref{hermitianpoly}, with $q=2$. $(R/I, \prec_W)$ is an order domain and then the code from the curve $x^{3} - y^2 -y$ over $\mathbb{F}_{4}$ is an order domain code. \\ We observe that this code is called Hermitian code since it is obtained by evaluating at the points of the Hermitian curve.
\end{example}
Observe that also other algebraic geometric codes, such as norm-trace codes, Reed-Solomon codes and Hyperbolic codes can be put into a form satisfying the order domain conditions (\cite{CGC-cd-book-Geil08}).

\begin{theorem}
Let $(R/I,\prec_W)$ be an order domain and $L \subseteq R_q$. The minimum distance of $C(I,L)$ is at least
\[
\min_{\alpha \in \mathcal{L}(L)} \sigma(w(\alpha))
\]
and the minimum distance of $C(I,L)^{\perp}$ is at least 
\[
\min \{\mu(\w(h)) \mid h \in \mathcal{N}_{\prec}(I_q) \setminus \mathcal{L}(L) \} \geq \min\{\mu(\lambda) \mid \lambda \in \Gamma \setminus \w(\mathcal{L}(L)) \}
\]
and so it is at least 
\begin{equation}\label{bound}
\min\{\mu(\lambda) \mid \lambda \in \Gamma \setminus \w(\mathcal{L}(L)) \}
\end{equation}
where $\Gamma := \w(\mathcal{N}_{\prec}(I))$ is the semigroup of the variable weights, $\mu(\lambda) := \mid\{ \alpha \in \Gamma \mid \exists \beta \in \Gamma \; \text{s.t.} \; \alpha + \beta = \lambda\} \mid$, for $\lambda \in \Gamma$, and $\sigma(\alpha) := \mid \{ \lambda \in \w(\mathcal{N}(I_q)) \mid \exists \beta \in \Gamma \; \text{s.t.} \; \alpha + \beta = \lambda\} \mid$, for $\alpha \in  \w(\mathcal{N}(I_q))$.
\end{theorem}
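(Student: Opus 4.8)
The plan is to obtain both bounds as a translation of Theorem~\ref{mindistaffinevarietycodes} into additive, semigroup language, the dictionary being the weight function $\rho$ produced by Theorem~\ref{theoorderdomain}. Before anything else I would record the structural facts that the order-domain hypotheses supply. Since $I\subseteq I_q$ we have $\mathcal{N}(I_q)\subseteq\mathcal{N}(I)$, hence $\w(\mathcal{N}(I_q))\subseteq\Gamma$. Condition (C2) says that the weight is a complete invariant of a staircase monomial, so $s\mapsto\w(s)$ is injective and restricts to a bijection between $\mathcal{N}(I_q)$ and $\w(\mathcal{N}(I_q))$. Finally, condition (C1) forces every reduction step modulo the Groebner basis $\mathcal{G}_I$ of $I$ to preserve the top weight, because a leading monomial is traded for the second monomial of equal highest weight plus strictly lighter terms; combined with (O.6) this is exactly the additivity $\w(\lm(\overline{pm}))=\w(p)+\w(m)$ in $R/I$. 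I would also isolate, as a lemma, the fact that in an order domain the ordered pairs of staircase monomials that arise are OWB, which collapses the quantifier ``$\exists\,h$ OWB'' of Theorem~\ref{mindistaffinevarietycodes} into a plain ``$\exists\,h$''.

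For the primary code, fix $p\in\mathcal{L}(L)$ and set $\alpha=\w(p)$. The quantity bounded in Theorem~\ref{mindistaffinevarietycodes} is the cardinality of
\[
S_p=\{\,s\in\mathcal{N}(I_q)\mid\exists\,h\in\mathcal{N}(I_q),\ (p,h)\text{ OWB},\ \lm(ph\;\rem\;\mathcal{G})=s\,\}.
\]
I would show that $s\mapsto\w(s)$ carries $S_p$ bijectively onto $\{\lambda\in\w(\mathcal{N}(I_q))\mid\exists\,\beta\in\Gamma,\ \alpha+\beta=\lambda\}$, so that $|S_p|=\sigma(\alpha)$. Injectivity is (C2); the forward inclusion uses additivity to write $\w(s)=\alpha+\w(h)$ with $\w(h)\in\Gamma$; the reverse inclusion requires, for each admissible $\lambda$ with $\lambda-\alpha=\beta\in\Gamma$, the production of a partner $h\in\mathcal{N}(I_q)$ realising the unique weight-$\lambda$ staircase monomial as $\lm(ph\;\rem\;\mathcal{G})$. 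Taking $\min_{p\in\mathcal{L}(L)}$ then yields the first displayed bound.

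The dual code is handled symmetrically. Fixing $s\in\mathcal{N}(I_q)\setminus\mathcal{L}(L)$ with $\lambda=\w(s)$, the same additivity-plus-bijection argument identifies the set counted in Theorem~\ref{mindistaffinevarietycodes} with $\{\alpha\in\Gamma\mid\exists\,\beta\in\Gamma,\ \alpha+\beta=\lambda\}$, of cardinality $\mu(\lambda)$, whence the bound $\min\{\mu(\w(h))\mid h\in\mathcal{N}(I_q)\setminus\mathcal{L}(L)\}$. The concluding estimate~\eqref{bound} is then purely set-theoretic: by the structural facts, $\{\w(h)\mid h\in\mathcal{N}(I_q)\setminus\mathcal{L}(L)\}\subseteq\Gamma\setminus\w(\mathcal{L}(L))$, and since $\mu$ is minimised over a larger index set on the right, $\min\{\mu(\w(h))\mid h\in\mathcal{N}(I_q)\setminus\mathcal{L}(L)\}\ge\min\{\mu(\lambda)\mid\lambda\in\Gamma\setminus\w(\mathcal{L}(L))\}$.

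I expect the genuine difficulty to be the reverse inclusion in the primary-code bijection (and its mirror for the dual): surjectivity onto the semigroup counting set. The obstruction is precisely the passage from $\mathcal{N}(I)$ to $\mathcal{N}(I_q)$. A witness $\beta=\lambda-\alpha\in\Gamma$ a priori lives in the infinite staircase $\mathcal{N}(I)$, whereas the partner $h$ demanded by Theorem~\ref{mindistaffinevarietycodes} must sit in the finite footprint $\mathcal{N}(I_q)$, and reductions by the field equations $x_i^q-x_i$ can strictly lower weight. Showing that the target monomial is nevertheless attainable as a leading monomial of some $ph\;\rem\;\mathcal{G}$ with $h\in\mathcal{N}(I_q)$ is the technical crux, and it is here that (C1), (C2) and the additivity of $\rho$ must be used in full; the remainder of the argument is bookkeeping with the weight function.
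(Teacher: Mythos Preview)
The paper does not supply a proof of this theorem at all: it is stated in the preliminaries section as a known result from the order-domain literature (the surrounding references are \cite{CGC-cd-art-AndeGeil} and \cite{CGC-cd-book-Geil08}), and the paper immediately moves on to the remark that the bound~\eqref{bound} ``can be easily computed from the knowledge of the semigroup.'' So there is nothing to compare your argument against in this paper itself.

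That said, your plan is the standard one and is essentially how the result is derived in the cited sources: one specialises Theorem~\ref{mindistaffinevarietycodes} to the order-domain setting, uses (C2) to replace staircase monomials by their weights bijectively, and uses (C1) together with the weight-function axiom (O.6) to turn the OWB/leading-monomial condition into the additive condition $\alpha+\beta=\lambda$ in $\Gamma$. You have also correctly located the one genuinely delicate point, namely the passage between $\mathcal{N}(I)$ and $\mathcal{N}(I_q)$ when producing the partner $h$; in the literature this is handled by observing that the unique monomial of weight $\lambda$ already lies in $\mathcal{N}(I_q)$ (since $\lambda\in\w(\mathcal{N}(I_q))$ by hypothesis), and that in an order domain every pair of staircase monomials is automatically OWB, so one may take $h$ to be the unique monomial of weight $\beta$ reduced into $\mathcal{N}(I_q)$ without losing control of the top weight of the product. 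Your sketch is therefore on the right track; the paper simply quotes the result rather than reproving it.
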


One of the advantages of the order domain approach to algebraic geometric codes is given by the bound on the distance (\ref{bound}) provided in previous theorem, since this is a bound that can be easily computed from the knowledge of the semigroup.
\subsection{Introduction to Hilbert quasi-polynomials}
In the following we refer to \cite{CGC-alg-book-kreurob05} for standard notation.

Let $I$ be a $W$-homogeneous ideal of $(R,\prec_{W})$. The component of $R/I$ of degree $k\in\mathbb{N}$ is given by
\[
(R/I)_k := \{f \in R/I \mid \w(m) = k\ \ \forall m \in \Supp(f) \} \quad 
\]
The \textbf{Hilbert function} $H_{R/I}^W : \mathbb{N} \rightarrow \mathbb{N}$ of $(R/I,\prec_{W})$ is defined by 
\[
H_{R/I}^W (k) := \dim_{\mathbb{K}}((R/I)_k)
\]
and the Hilbert-Poincar\'e series of $(R/I,\prec_{W})$ is given by
\[
\HP_{R/I}^W (t) := \sum_{k \in \mathbb{N}} H_{R/I}^W(k) t^k \in \mathbb{N}\llbracket t \rrbracket
\]
When the grading given by $W$ is clear from the context, we denote the Hilbert function and the Hilbert-Poincaré series of $(R/I,\prec_{W})$ by $H_{R/I}$ and $\HP_{R/I}$,  respectively.\\
By the Hilbert-Serre theorem, the Hilbert-Poincar\'e series of $(R/I,\prec_{W})$ is a rational function, which is
\[
\HP_{R/I}(t) = \frac{h(t)}{\prod_{i=1}^n( 1-t^{ w_i})}\in \mathbb{Z} \llbracket t \rrbracket
\]

We recall that a function $f \colon \mathbb{N} \to \mathbb{N}$ is a \emph{quasi-polynomial of period s} if there exists a set of $s$ polynomials $\{p_0, \dots, p_{s-1}\}$ in $ \mathbb{Q}[x]$ such that $f(n) = p_i(n)$ when $n \equiv i \bmod s$. Let $d:= \lcm(w_1,\dots, w_n)$ and let $(R/I,\prec_{W})$ be as above. We now refer to \cite{CGC-alg-book-vasconcelos04} for Hilbert quasi-polynomial theory. There exists a unique quasi-polynomial $P_{R/I}^W:=\{P_0, \dots, P_{d-1}\}$ of period $d$ such that $H_{R/I}(k) = P_{R/I}^W(k)$ for all $k$ sufficiently large (that we denote with $k\gg 0$), i.e.
\[
H_{R/I}(k) = P_i(k) \qquad \forall i \equiv k\mod d  \quad \text{and} \quad \forall k\gg 0
\]
$P_{R/I}^W$ is called the \textit{Hilbert quasi-polynomial associated to} $(R/I,\prec_{W})$. Observe that if $d=1$, then $P_{R/I}^W$ is a polynomial, and it is simply the \textit{Hilbert polynomial}. We underline that the Hilbert quasi-polynomial does not depend on the chosen monomial ordering $\prec_{\mathcal{M}}$, but only on the variable weights. It consists of $d$ polynomials, which are not necessarily  distinct. 
The minimum integer $k_0 \in \mathbb{N}$ such that $H_{R/I}(k) = P_{R/I}^W(k) \; \forall \ k \geq k_0$ is called \textit{generalized regularity index} and we denote it by $\ri(R/I)$.  
All the polynomials of the Hilbert quasi-polynomial $P_{R/I}^W$ have rational coefficients and the same degree $r \leq n-1$, where the equality holds if and only if $I=(0)$. In this latter case, the leading coefficient $a_{n-1}$ is the same for all $P_i$, with $i=0, \dots, d-1$, and $a_{n-1} = \frac{1}{(n-1)! \prod_{i=1}^n w_i}$.

\section{Computational improvements for Hilbert quasi-polynomials, with an application to order domains} \label{appl}

In this section we present an algorithm for an effective computation of Hilbert quasi-polynomials and we show how to exploit them for checking order domain conditions. \\

We have improved the Singular procedures showed in \cite{CGC-alg-art-cabmasc16} to compute the Hilbert quasi-polynomial for rings $\mathbb{K}[x_1,\ldots,x_n]/I$.\\
Before showing the algorithm and our improvement, we give some preliminary results. 

\begin{theorem}\label{theo1stanley} \textnormal{\cite{GCG-alg-book-stanley2007}}
Let $\alpha_1, \dots, \alpha_d$ be a fixed sequence of complex numbers, $d \geq 1$ and $\alpha_d \neq 0$. The following condition on a function $f: \mathbb{N} \rightarrow \mathbb{C}$ are equivalent:
\begin{itemize}
\item[(i)] \[
\sum_{n\geq 0} f(n) x^n = \frac{P(x)}{Q(x)} ,
\]
where $Q(x) = 1+\alpha_1 x + \cdots +\alpha_d x^d$ and $P(x)$ is a polynomial in $x$ of degree less than $d$.
\item[(ii)] For all $n \geq 0$,
\begin{equation}\label{eq1theostan}
f(n+d) + \alpha_1 f(n+d-1) + \cdots + \alpha_d f(n)=0.
\end{equation}
\item[(iii)] For all $n \geq 0$,
\begin{equation}\label{eq2theostan}
f(n) = \sum_{i=1}^k P_i(n)\gamma_i^n ,
\end{equation}
where $1+\alpha_1x + \cdots + \alpha_dx^d = \prod_{i=1}^k (1-\gamma_i x)^{d_i}$, the $\gamma_i$ 's are distinct, and $P_i(n)$ is a polynomial in $n$ of degree less than $d_i$. 
\end{itemize}
\end{theorem}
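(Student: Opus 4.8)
The plan is to establish all three equivalences by proving $(i) \Leftrightarrow (ii)$ via a direct comparison of power-series coefficients, and then proving $(i) \Leftrightarrow (iii)$ via the partial fraction decomposition of $P(x)/Q(x)$; together these close the loop among the three conditions.

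For $(i) \Leftrightarrow (ii)$, I would set $F(x) = \sum_{n \geq 0} f(n) x^n$ and examine the product $F(x)Q(x)$ as a formal power series. Since $Q(x) = 1 + \alpha_1 x + \cdots + \alpha_d x^d$, the coefficient of $x^{n+d}$ in $F(x)Q(x)$ is exactly $f(n+d) + \alpha_1 f(n+d-1) + \cdots + \alpha_d f(n)$ for every $n \geq 0$. Hence $F(x)Q(x)$ is a polynomial of degree less than $d$ — equivalently $F(x) = P(x)/Q(x)$ with $\deg P < d$ — precisely when all these coefficients vanish, which is condition $(ii)$. This direction is pure bookkeeping and presents no obstacle.

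For $(i) \Rightarrow (iii)$, factor $Q(x) = \prod_{i=1}^k (1 - \gamma_i x)^{d_i}$ with the $\gamma_i$ distinct and $\sum_{i=1}^k d_i = d$; the hypothesis $\alpha_d \neq 0$ guarantees $\deg Q = d$ and, since $Q(0) = 1$, that every $\gamma_i \neq 0$. Because $\deg P < \deg Q$, a partial fraction expansion gives
\[
\frac{P(x)}{Q(x)} = \sum_{i=1}^k \sum_{j=1}^{d_i} \frac{c_{ij}}{(1 - \gamma_i x)^j}
\]
with no polynomial part. Using the expansion $(1 - \gamma x)^{-j} = \sum_{n \geq 0} \binom{n + j - 1}{j - 1} \gamma^n x^n$ and noting that $\binom{n+j-1}{j-1}$ is a polynomial in $n$ of degree $j - 1$, the coefficient of $x^n$ becomes $\sum_{i=1}^k P_i(n) \gamma_i^n$, where $P_i(n) = \sum_{j=1}^{d_i} c_{ij}\binom{n+j-1}{j-1}$ is a polynomial of degree less than $d_i$. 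For the converse $(iii) \Rightarrow (i)$, I would reverse this computation: each summand $P_i(n)\gamma_i^n$ is a $\mathbb{C}$-linear combination of the sequences $\binom{n+j-1}{j-1}\gamma_i^n$ for $1 \leq j \leq d_i$, whose generating functions are the $(1-\gamma_i x)^{-j}$; summing over $i$ and clearing denominators produces $P(x)/Q(x)$ with $\deg P < d$.

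The main obstacle is the degree bookkeeping in the partial fraction step: one must verify that the constraint $\deg P < d$ corresponds exactly to the absence of a polynomial part together with the bounds $\deg P_i < d_i$. I expect this is most cleanly handled by a dimension count — both the space of admissible numerators $P$ with $\deg P < d$ and the space of sequences of the form $(iii)$ with the prescribed degree bounds have dimension $\sum_{i=1}^k d_i = d$ over $\mathbb{C}$ — so the linear map sending $P$ to its coefficient sequence is a bijection, forcing the degree bounds to correspond in both directions.
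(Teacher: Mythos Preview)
The paper does not supply its own proof of this theorem: it is quoted verbatim as a known result from Stanley's book \cite{GCG-alg-book-stanley2007} and used as a black box in the proof of Proposition~\ref{RI}. Your argument is correct and is in fact the standard one (and essentially the one Stanley gives): $(i)\Leftrightarrow(ii)$ by reading off the coefficient of $x^{n+d}$ in $F(x)Q(x)$, and $(i)\Leftrightarrow(iii)$ via partial fractions together with the expansion $(1-\gamma x)^{-j}=\sum_{n\ge 0}\binom{n+j-1}{j-1}\gamma^n x^n$, with the dimension count $\sum_i d_i = d$ ensuring the degree constraints match up. There is nothing to add or correct.
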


\begin{proposition}\label{prop2stanley} \textnormal{\cite{GCG-alg-book-stanley2007}}
Let $f: \mathbb{N} \rightarrow \mathbb{C}$ and suppose that
\[
\sum_{n\geq 0} f(n)x^n = \frac{P(x)}{Q(x)}
\]
where $P,Q \in \mathbb{C}[x]$. then there is a unique finite set $E_f \subset \mathbb{N}$ and a unique function $f_1: E_f \rightarrow \mathbb{C}* = \mathbb{C} \setminus \{0\}$ such that the function $g: \mathbb{N} \rightarrow \mathbb{C}$  defined by
\[
g(n) =
\begin{cases}
f(n) & \text{ if } n \not \in E_f, \\
f(n)+f_1(n) & \text{ if } n \in E_f
\end{cases}
\]
satisfies $\sum_{n \geq 0} g(n) x^n = R(x) / Q(x)$, where $R \in \mathbb{C}[x]$ and $\deg R < \deg Q$. Moreover, assuming $E_f \neq \emptyset$ (i.e. $\deg P \geq \deg Q$), define $\mathrm{m}(f) = \max \{ i : i \in E_f\}$. Then:
\begin{itemize}
\item[(i)] $\mathrm{m}(f) = \deg P - \deg Q$,
\item[(ii)] $\mathrm{m}(f)$ is the largest integer $n$ for which (\ref{eq1theostan}) fails to hold,
\item[(iii)] writing $Q(x) = \prod_{i=1}^k (1-\gamma_i x)^{d_i}$ as above, there are unique polynomials $P_1, \dots, P_k$ for which  (\ref{eq2theostan}) holds for $n$ sufficiently large. Then $\mathrm{m}(f)$ is the largest integer $n$ for which  (\ref{eq2theostan}) fails.
\end{itemize}
\end{proposition}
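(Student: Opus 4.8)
The plan is to isolate the polynomial part of $P(x)/Q(x)$ by Euclidean division and to recognize that its coefficient sequence is exactly the correction $f_1$. Writing $d=\deg Q$ (and normalizing $Q(x)=1+\alpha_1 x+\cdots+\alpha_d x^d$ as in Theorem \ref{theo1stanley}, which is forced by $Q(0)\neq 0$ for the power series to exist), I would first divide to get $P(x)=A(x)Q(x)+R(x)$ with $\deg R<\deg Q$, uniquely. Writing $A(x)=\sum_{n} a_n x^n$ and defining $g\colon\mathbb{N}\to\mathbb{C}$ by $\sum_{n\geq 0} g(n)x^n=R(x)/Q(x)$, the identity $P/Q=A+R/Q$ gives $f(n)=g(n)+a_n$ for every $n$. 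Hence I would set $E_f:=\Supp(A)=\{n: a_n\neq 0\}$ and $f_1(n):=-a_n$ on $E_f$; by construction $f_1$ takes values in $\mathbb{C}^*$, the piecewise description of $g$ agrees with $g$, and $\sum g(n)x^n=R(x)/Q(x)$ has numerator of degree $<\deg Q$, as required.

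For uniqueness, suppose another finite set $E'$ and function $f_1'$ produce a $g'$ with $\sum g'(n)x^n=R'(x)/Q(x)$, $\deg R'<\deg Q$. Since $g'-f$ has finite support, $\sum_n(f(n)-g'(n))x^n$ is a polynomial $A'(x)$, and subtracting the two generating-function identities yields $A'(x)=(P(x)-R'(x))/Q(x)$, i.e.\ $P=A'Q+R'$ with $\deg R'<\deg Q$. Uniqueness of Euclidean division forces $A'=A$ and $R'=R$, whence $g'=g$, $E'=E_f$ and $f_1'=f_1$. Part (i) is then immediate: when $E_f\neq\emptyset$, i.e.\ $\deg P\geq\deg Q$, we have $m(f)=\max E_f=\deg A=\deg P-\deg Q$, using that the leading coefficient $a_{\deg A}$ is nonzero.

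Parts (ii) and (iii) I would obtain by comparing $f$ with $g$ through Theorem \ref{theo1stanley}. Since $g$ has generating function $R/Q$ with $\deg R<\deg Q$, the implication (i)$\Rightarrow$(ii) of that theorem shows $g$ satisfies the recurrence (\ref{eq1theostan}) for all $n\geq 0$; by linearity, applying the recurrence operator to $f(n)=g(n)+a_n$ leaves only the contribution of the sequence $(a_n)$, which is precisely the coefficient $c_{n+d}=\sum_{j=0}^d\alpha_j a_{n+d-j}$ of $x^{n+d}$ in $Q(x)A(x)$. Thus (\ref{eq1theostan}) fails at $n$ exactly when $c_{n+d}\neq 0$; since $\deg(QA)=\deg Q+\deg A=d+m(f)$ with leading coefficient $\alpha_d\,a_{m(f)}\neq 0$, the largest such $n$ is $m(f)$, giving (ii). For (iii), the same argument with (i)$\Rightarrow$(iii) of Theorem \ref{theo1stanley} produces polynomials $P_1,\dots,P_k$ with $g(n)=\sum_i P_i(n)\gamma_i^n$ for all $n$; since $f(n)=g(n)$ for $n>m(f)$, these same $P_i$ realize (\ref{eq2theostan}) for $f$ when $n\gg 0$, and their uniqueness follows from the linear independence of the functions $n\mapsto n^j\gamma_i^n$. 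As $f(n)-g(n)=a_n$, formula (\ref{eq2theostan}) fails for $f$ exactly on $\Supp(A)=E_f$, so its largest point of failure is again $m(f)$. I expect the only delicate point to be the bookkeeping in part (ii): correctly identifying the recurrence operator applied to $(a_n)$ with the top coefficients of $Q(x)A(x)$, and verifying that $\alpha_d\,a_{m(f)}\neq 0$ so that the maximal failure occurs precisely at $m(f)$.
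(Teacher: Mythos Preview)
The paper does not supply its own proof of this proposition: it is quoted verbatim from Stanley's book \cite{GCG-alg-book-stanley2007} and used as a black box to derive Proposition~\ref{RI}. Your argument is correct and is in fact the standard proof---Euclidean division $P=AQ+R$ with $\deg R<\deg Q$ isolates the polynomial part $A$, whose support is forced to be $E_f$ with $f_1=-a_n$, and then the equivalences of Theorem~\ref{theo1stanley} applied to $g$ (generating function $R/Q$) transfer back to $f$ modulo the finitely supported correction $(a_n)$. The one place to be slightly careful, which you already flagged, is that in (ii) the recurrence defect at $n$ is the coefficient of $x^{n+d}$ in $Q(x)A(x)=P(x)-R(x)$; since $\deg R<d$ this polynomial has top degree $d+\deg A=d+m(f)$ with leading coefficient $\alpha_d a_{m(f)}\neq 0$, so the maximal failure is exactly at $n=m(f)$.
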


Thanks to these two results, we are able to compute the generalized regularity index of $R/I$, one compute the Hilbert-Poincaré series of $R/I$.

\begin{proposition}\label{RI}
Let $(R/I,\prec_W)$ be as usual and let $\HP_{R/I}^W (t) = \frac{h(t)}{g(t)}$. Then the generalized regularity index of $R/I$ is given by
\[
\ri(R/I) = \max \{0, \deg h(t)  - \deg g(t) +1 \}
\]
\end{proposition}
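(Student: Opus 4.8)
The plan is to apply Proposition \ref{prop2stanley} directly to the function $f = H_{R/I}$, whose generating series is $\sum_{n \geq 0} H_{R/I}(n)\, t^n = h(t)/g(t)$ by the very definition of the Hilbert-Poincaré series. Writing $g(t) = \prod_{i=1}^k (1 - \gamma_i t)^{d_i}$, part (iii) of that proposition supplies unique polynomials $P_1, \dots, P_k$ for which the identity (\ref{eq2theostan}), namely $H_{R/I}(n) = \sum_{i=1}^k P_i(n)\gamma_i^n$, holds for $n$ sufficiently large, together with the crucial fact that $\mathrm{m}(f)$ is the largest integer at which (\ref{eq2theostan}) fails.

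The first substantive step is to identify the expression $\sum_{i=1}^k P_i(n)\gamma_i^n$ with the Hilbert quasi-polynomial $P_{R/I}^W$. The point is that here $g(t) = \prod_{i=1}^n (1 - t^{w_i})$, so each $\gamma_i$ is a root of unity whose order divides $d = \lcm(w_1, \dots, w_n)$; hence $\gamma_i^n$ depends only on $n \bmod d$, and on each residue class modulo $d$ the function $n \mapsto \sum_i P_i(n)\gamma_i^n$ restricts to an honest polynomial in $n$. Thus $\sum_i P_i(n)\gamma_i^n$ is a quasi-polynomial of period $d$ agreeing with $H_{R/I}(n)$ for $n \gg 0$; since two quasi-polynomials of period $d$ that agree for large $n$ agree on every residue class (a polynomial is pinned down by infinitely many values), the uniqueness clause for $P_{R/I}^W$ forces this expression to coincide with $P_{R/I}^W$ on all of $\mathbb{N}$.

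With this identification in hand, $H_{R/I}(n) = P_{R/I}^W(n)$ holds if and only if (\ref{eq2theostan}) holds at $n$, so computing $\ri(R/I)$ reduces to locating the last failure of (\ref{eq2theostan}). When $\deg h \geq \deg g$, Proposition \ref{prop2stanley} gives $E_f \neq \emptyset$ and $\mathrm{m}(f) = \deg h - \deg g$, so the identity fails at $n = \mathrm{m}(f)$ and holds for every $n > \mathrm{m}(f)$; therefore the smallest $k_0$ with $H_{R/I}(k) = P_{R/I}^W(k)$ for all $k \geq k_0$ is exactly $\mathrm{m}(f) + 1 = \deg h - \deg g + 1$. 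When $\deg h < \deg g$ we instead have $E_f = \emptyset$, so (\ref{eq2theostan}) already holds for all $n \geq 0$ and $\ri(R/I) = 0$. Merging the two cases yields $\ri(R/I) = \max\{0,\, \deg h - \deg g + 1\}$.

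The step I expect to be the main obstacle is the identification in the second paragraph: one must argue cleanly that the polynomials $P_i$ produced by Stanley's machinery reassemble into precisely the quasi-polynomial $P_{R/I}^W$, so that \emph{failure of} (\ref{eq2theostan}) and \emph{disagreement of $H_{R/I}$ with its associated quasi-polynomial} are literally the same event. Once this equivalence is secured, everything downstream is a direct reading of parts (i) and (iii) of Proposition \ref{prop2stanley}, plus the routine bookkeeping of the degenerate case $\deg h < \deg g$.
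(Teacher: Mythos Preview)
Your proof is correct and follows essentially the same route as the paper's: both apply Theorem~\ref{theo1stanley} and Proposition~\ref{prop2stanley} to write $H_{R/I}(k)$ eventually as $\sum_j S_j(k)\zeta^{jk}$ with $\zeta$ a primitive $d$th root of unity, then invoke uniqueness of the Hilbert quasi-polynomial to identify this expression with $P_{R/I}^W$. If anything, you are slightly more explicit than the paper on one point: you use part~(iii) of Proposition~\ref{prop2stanley} to argue that $\mathrm{m}(f)=\deg h-\deg g$ is genuinely the \emph{last} failure of the identity, so that $\ri(R/I)$ equals $\mathrm{m}(f)+1$ and not merely is bounded above by it, whereas the paper's proof only verifies the inequality $\ri(R/I)\le k_0$.
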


\begin{proof}
By Theorem \ref{theo1stanley} and Proposition \ref{prop2stanley}, and since
\[
\sum_{k \geq 0} H_{R/I} (k) t^k = \frac{h(t)}{g(t)}
\]
with $g(t) = \prod_{i=1}^n (1- t^{w_i}) = \prod_{j=0}^{d-1} (1- \zeta^j t)^{\alpha_j}$, where $\zeta$ is a primitive $d$th root of unity and $\sum_{i=1}^n w_i=\sum_{j=0}^{d-1}\alpha_j=\deg g(t)$, we obtain that, for all $k \geq k_0$ with
\[
k_0 = \begin{cases}
0& \text{if } \deg h(t) < \deg g(t) \\
\deg h(t)  - \deg g(t) +1& \text{if } \deg h(t) \geq \deg g(t)
\end{cases}
\quad ,
\]
the Hilbert function can be written as
\[
H_{R/I}(k) = \sum_{i=0}^{d-1} S_i (k) \zeta^{ik}
\]
where $S_i(k)$ is a polynomial in $k$ of degree less than $\alpha_i$. Then, by uniqueness of the Hilbert quasi-polynomial of period $d$, we deduce that the $i$th polynomial of $P_{R/I}$ is given by 
$P_i(t) := \sum_{j=0}^{d-1} S_j(t)\zeta^{ij}$ and that $H_{R/I}(k) = P_i(k)$ for $k \geq k_0 \text{ and } k \equiv i \mod d$.
\end{proof}

\begin{remark}
Since $\HP^W_R(t) = \frac{1}{\prod_{i=1}^{n} (1- t^{w_i})}$, the generalized regularity index of $R$ is 0.
\end{remark}

Thanks to the following two results we can recover $H^W_{R/I}(k)$ from $H^W_{R}(k)$ and $P_{R/I}^{W'}$ from $P_R^W$, where $W$ is obtained by $W'$ dividing each $w_i \in W'$ by $\gcd(W')$.

\begin{proposition}\label{recoverH}
Let $\bar{I}$ be the initial ideal of $I$ and $\HP^W_{R/\bar{I}}(t) = \frac{h(t)}{g(t)}$ the Hilbert-Poincaré series of $R/\bar{I}$, with $h(t) = \sum_{i=0}^s h_i t^i$. Then 
\[
H^W_{R/\bar{I}}(k) = \sum_{i=0}^s h_i H^W_R(k-i) 
\]
for all $k \geq 0$, with $H^W_{R}(k) = 0$ for all $k <0$.
\end{proposition}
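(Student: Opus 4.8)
The plan is to reduce the claimed identity to a routine coefficient comparison in the product of two formal power series, the essential input being that the denominators of the two Hilbert-Poincar\'e series coincide. First I would invoke the Remark immediately preceding the statement, which records that
\[
\HP^W_R(t) = \frac{1}{\prod_{i=1}^n(1-t^{w_i})} = \frac{1}{g(t)};
\]
this is just the Hilbert-Serre form for $R$ itself, whose numerator is $1$. Since $\bar I$ is a monomial (hence $W$-homogeneous) ideal, the $W$-graded pieces of $R/\bar I$ are well defined and its series has the same denominator $g(t)=\prod_{i=1}^n(1-t^{w_i})$ by Hilbert-Serre. Combining this with the hypothesis $\HP^W_{R/\bar I}(t)=h(t)/g(t)$ immediately yields the factorization
\[
\HP^W_{R/\bar I}(t) = h(t)\cdot \HP^W_R(t),
\]
which is the algebraic heart of the statement: multiplying the full series of $R$ by the numerator polynomial $h(t)$ produces exactly the series of $R/\bar I$.

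Next I would expand both factors. By definition $\HP^W_{R/\bar I}(t)=\sum_{k\geq 0}H^W_{R/\bar I}(k)\,t^k$ and $\HP^W_R(t)=\sum_{j\geq 0}H^W_R(j)\,t^j$, while $h(t)=\sum_{i=0}^s h_i t^i$ is a polynomial of degree $s$. Applying the Cauchy product rule to $h(t)\cdot\HP^W_R(t)$ and reading off the coefficient of $t^k$ gives
\[
H^W_{R/\bar I}(k) = \sum_{i=0}^{k} h_i\, H^W_R(k-i)
\]
for every $k\geq 0$. It then remains only to rewrite the summation range: since $h_i=0$ for $i>s$, and adopting the stated convention $H^W_R(m)=0$ for $m<0$, the terms with $i>s$ or $i>k$ contribute nothing, so the sum may be taken from $0$ to $s$ without altering its value. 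This produces the asserted formula $H^W_{R/\bar I}(k)=\sum_{i=0}^s h_i\,H^W_R(k-i)$.

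I do not anticipate any genuine obstacle here; the proof is essentially an exercise in power-series multiplication. The only points requiring care are verifying that the Hilbert-Serre expression gives numerator exactly $1$ for $R$ so that the two denominators literally cancel, and the bookkeeping of the boundary cases in the convolution, namely setting negative-index values of $H^W_R$ to zero so that the versions with upper limit $k$ and upper limit $s$ agree for all $k\geq 0$. It is worth remarking that the argument uses nothing specific about $\bar I$ being an initial ideal beyond its $W$-homogeneity, which is what guarantees that the graded components and hence the series are well defined.
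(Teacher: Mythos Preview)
Your argument is correct and matches the paper's proof essentially line for line: the paper also writes $\HP_{R/\bar I}(t)=h(t)\cdot\HP_R(t)$ using $\HP_R(t)=1/\prod_i(1-t^{w_i})$, expands the Cauchy product, and reads off the coefficient of $t^k$ with the convention $H_R(k)=0$ for $k<0$. The only difference is cosmetic---you spell out the index-range bookkeeping and the $W$-homogeneity of $\bar I$ more explicitly than the paper does.
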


\begin{proof}
Since \[
\HP_R(t) = \sum_{k \geq 0} H_R(k)t^k = \frac{1}{\prod_{i=1}^n (1-t^{w_i})}
\]
we have
\[
\HP_{R/\bar{I}}(t) = \sum_{k \geq 0} H_{R/\bar{I}}(k)t^k = \frac{h(t)}{\prod_{i=1}^n (1-t^{w_i})} = \left( \sum_{k \geq 0} H_R(k)t^k \right) \left( \sum_{j=0}^s h_jt^j \right) = \sum_{k \geq 0} \left( \sum_{j=0}^s h_j H_R(k-j) \right) t^k
\]
where $H_R(k) = 0$ for all $k < 0$. Therefore, $H_{R/\bar{I}}(k) = \sum_{j=0}^s h_j H_R(k-j)$, for all $k \geq 0$.
\end{proof}

\begin{lemma}\label{PRW} (\cite{CGC-alg-art-cabmasc16})
Let $W' := a \cdot W = [w_1', \dots, w_n']$ for 
some $a \in \mathbb{N}_+$ and let $\displaystyle \HP^W_{R/I}(t) = \frac{\sum_{j=0}^s h_jt^j}{\prod_{i=1}^n(1-t^{w_i})}$. Then it holds:
\begin{enumerate}

\item 
$P_{R/I}^W(k) = \sum_{j=0}^s h_j P_R^W(k-j)$ for all $k \geq \ri(R/I)$

\item 

$P_R^{W'}=\{P_0', \dots, P_{ad-1}'\}$ is such that
\[
P_i'(x) =\begin{cases}
0&\text{ if } a \nmid i \; \\
P_{\frac{i}{a}}\left(\frac{x}{a}\right)& \text{ if } a \mid i \;  
\end{cases}
\]

\end{enumerate}
\end{lemma}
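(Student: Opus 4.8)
The plan is to treat the two statements separately, deriving both from the multiplicative shape of $\HP_R^W$ together with the already-established fact that $\ri(R)=0$.

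For part (1) I would start from the identity $\HP_{R/I}^W(t)=h(t)\,\HP_R^W(t)$, which is immediate since $\HP_{R/I}^W(t)=\frac{h(t)}{g(t)}$ and $\HP_R^W(t)=\frac{1}{g(t)}$ with $g(t)=\prod_{i=1}^n(1-t^{w_i})$, and read off the Cauchy-product relation $H_{R/I}^W(k)=\sum_{j=0}^s h_j\,H_R^W(k-j)$ for every $k\ge 0$, with the convention $H_R^W(m)=0$ for $m<0$. This is exactly the computation carried out in the proof of Proposition \ref{recoverH}, applied to $I$ rather than $\bar I$, which is legitimate because $\HP_{R/I}^W=\HP_{R/\bar I}^W$. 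For $k\ge\ri(R/I)$ the left-hand side equals $P_{R/I}^W(k)$, and for each index $j$ with $k-j\ge 0$ I may replace $H_R^W(k-j)$ by $P_R^W(k-j)$ since $\ri(R)=0$. What remains is to show that the terms with $k-j<0$, discarded on the Hilbert-function side, can be reinstated as $h_jP_R^W(k-j)$ without changing the value, i.e.\ that $P_R^W$ vanishes at the relevant negative integers.

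The hard part is therefore the vanishing claim: $P_R^W(m)=0$ for every $m$ with $-(\deg g-1)\le m\le -1$. Granting it, part (1) closes immediately, because Proposition \ref{RI} gives $\ri(R/I)\ge\deg h-\deg g+1$, so for $k\ge\ri(R/I)$ and $0\le j\le s=\deg h$ the most negative argument $k-s$ satisfies $k-s\ge-(\deg g-1)$; hence every discarded term lies in the vanishing window and contributes $0$, and the complete sum $\sum_{j=0}^s h_j P_R^W(k-j)$ agrees with $P_{R/I}^W(k)$. To prove the vanishing I would argue by downward induction within the paper's toolkit. Writing $g(t)=1+\alpha_1 t+\cdots+\alpha_{\deg g}t^{\deg g}$, the relation $g(t)\HP_R^W(t)=1$ forces $\sum_{i=0}^{\deg g}\alpha_i H_R^W(N-i)=0$ for every $N\ge 1$, while the exponential-polynomial form in Theorem \ref{theo1stanley}(iii) makes $P_R^W$ satisfy the same constant-coefficient recurrence at \emph{every} integer argument. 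Evaluating this recurrence for $P_R^W$ at the argument $N=\deg g-m$ for $m=1,2,\dots,\deg g-1$, and using that $P_R^W$ and $H_R^W$ coincide on the nonnegative integers together with the inductive vanishing at $-1,\dots,-(m-1)$, each step isolates the single new term $\alpha_{\deg g}P_R^W(-m)$ against a coefficient relation that vanishes (valid because $\deg g-m\ge 1$); since $\alpha_{\deg g}=(-1)^n\ne 0$ this yields $P_R^W(-m)=0$.

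For part (2) the computation is more transparent and needs no reciprocity. Since $w_i'=aw_i$, I would use $\HP_R^{W'}(t)=\prod_{i=1}^n(1-t^{aw_i})^{-1}=\HP_R^W(t^a)$, so that $H_R^{W'}(k)=H_R^W(k/a)$ when $a\mid k$ and $H_R^{W'}(k)=0$ otherwise. The period of $P_R^{W'}$ is $\lcm(aw_1,\dots,aw_n)=a\,\lcm(w_1,\dots,w_n)=ad$, matching the claimed length $ad$. Fixing a residue $i\in\{0,\dots,ad-1\}$ and letting $k\equiv i \pmod{ad}$ grow, $\ri(R)=0$ gives $P_i'(k)=H_R^{W'}(k)$ for all such $k\ge 0$. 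If $a\nmid i$, then $a\nmid k$, so $H_R^{W'}(k)=0$ for infinitely many $k$ and the polynomial $P_i'$ is identically $0$; if $a\mid i$, then $a\mid k$, $k/a\equiv i/a\pmod d$, and $H_R^{W'}(k)=H_R^W(k/a)=P_{i/a}(k/a)$, so $P_i'(x)$ and $P_{i/a}(x/a)$ are polynomials agreeing on infinitely many values and hence equal. This is precisely the asserted formula, and the only care needed is the bookkeeping of residues modulo $ad$ versus modulo $a$ and $d$.
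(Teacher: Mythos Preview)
The paper does not supply its own proof of this lemma: it is quoted from \cite{CGC-alg-art-cabmasc16} and used as a black box, so there is nothing in the present paper to compare your argument against.

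That said, your argument stands on its own. Part~(2) is clean and correct: the substitution $\HP_R^{W'}(t)=\HP_R^W(t^a)$ together with $\ri(R)=0$ (applied with grading $W'$) pins down each $P_i'$ by interpolation, exactly as you wrote. For part~(1), the Cauchy-product identity and the replacement of $H$ by $P$ on both sides are unproblematic; the only genuinely nontrivial step is your vanishing claim $P_R^W(-m)=0$ for $1\le m\le \deg g-1$. Your induction via the recurrence works: using the representation $P_R^W(k)=\sum_{j=0}^{d-1}S_j(k)\zeta^{jk}$ from the proof of Proposition~\ref{RI}, the quasi-polynomial satisfies $\sum_{i=0}^{\deg g}\alpha_i\,P_R^W(N-i)=0$ for every integer $N$, while the Hilbert function satisfies the same relation for $N\ge 1$ with the convention $H_R^W(<0)=0$; subtracting at $N=\deg g-m$ (which is $\ge 1$ precisely in the claimed range) and using $\alpha_{\deg g}=(-1)^n\neq 0$ gives the inductive step. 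The bound $k-s\ge -(\deg g-1)$ coming from Proposition~\ref{RI} then ensures you never leave this vanishing window, so the identity for $P_{R/I}^W$ follows. The exposition of the induction is compressed; spelling out the subtraction of the two recurrences would make it easier to read, but the mathematics is sound.
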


\subsection{Algorithm for computing Hilbert quasi-polynomials}\label{algquasipoly}
Let $(R/I, \prec_W)$ be as usual, we wish to compute its Hilbert quasi-polynomial $P_{R/I}^W :=\{ P_0, \dots, P_{d-1}\}$. Since we know some degree bounds for Hilbert quasi-polynomials, we can compute them by means of interpolation.
\smallskip
 
First of all, let us consider $I = (0)$ and $W$ such that the $w_i$'s have no common factor. 
Each $P_j$ has degree equal to $n-1$, so, given $j=0, \dots, d-1$, we want to calculate $P_j (x) := a_0 + a_1 x + \dots + a_{n-1} x^{n-1}$
such that
\[
P_j (k) = H_R(k) \qquad \forall k \geq 0 \quad \text{and} \quad k \equiv j  \mod d
\]

Therefore, let us consider the first $n$ positive integers $x_r$ such that $P_j (x_r) = H_R (x_r)$
\[
x_r := j + rd, \quad \text{for } r = 0, \dots, n-1
\]
By construction, the polynomial $P_j(x)$ interpolates the points $(x_r, H_R(x_r))$. 

Since we know the leading coefficient $a_{n-1}$, we can reduce the number of interpolation points $x_r$, and we get a system of linear equations in the coefficients $a_i$, with $i=0, \dots, n-2$.  The system in matrix-vector form reads
\
\begin{equation}\label{linearsystem}
\begin{bmatrix}
1 & x_0 & \ldots  & x_0^{n-2} \\
1 & x_1 & \ldots   & x_1^{n-2} \\
\vdots & \vdots    & \vdots    & \vdots \\
1 & x_{n-2} & \ldots  & x_{n-2}^{n-2} 
\end{bmatrix}
\begin{bmatrix} a_0 \\ a_1 \\ \vdots \\ a_{n-2} \end{bmatrix}  = \begin{bmatrix} H_R(x_0) - a_{n-1}x_0^{n-1} \\ H_R(x_1) - a_{n-1}x_1^{n-1} \\ \vdots \\ H_R(x_{n-2}) - a_{n-1}x_{n-2}^{n-1}
\end{bmatrix}
\end{equation}

We observe that for computing $P_j$ the algorithm requires the computation of $n-1$ values of the Hilbert function, the construction of a Vandermonde matrix of dimension $n-1$ and its inversion. We have not yet shown how to calculate $H_R(x_r)$, for $r=0, \dots, n-2$.
 
\smallskip
 
Let $k \in \mathbb{N}$. The problem of calculating $H_R(k)$ is equivalent to the problem of determining the number of partitions of an integer into elements of a finite set $S := \{ w_1, \dots, w_n \}$, that is, the number of solutions in non-negative integers, $\alpha_1, \dots, \alpha_n$, of the equation
\[
\alpha_1 w_1 + \cdots + \alpha_n w_n =k
\]
This problem was solved in  \cite{CGC-alg-art-sylvester1882}, \cite{CGC-alg-art-glaisher1909} and the solution is the coefficient of $x^k$ in the following power series
\begin{equation}\label{power series}
\frac{1}{ (1- x^{w_1}) \cdots (1-x^{w_n})}
\end{equation}

We are going to give an efficient method for getting the coefficient of $x^k$ in the power series expansion of Equation (\ref{power series}). We refer to \cite{CGC-alg-art-lee92} for an in-depth analysis on the power series expansion of a rational function. Let

\[
g(x) = \prod_{i=1}^k (1-\lambda_i x)^{\alpha_i} \qquad \text{and} \qquad f(x) = \prod_{i = k+1}^l (1- \lambda_i x)^{\alpha_i}
\]
be  polynomials in $\mathbb{C}[x]$, where $\alpha_1,\ldots, \alpha_l$ are non-negative integers, all $\lambda_i$'s are distinct and non-zero and the degree of $f(x)$ is less than that of $g(x)$.

\begin{lemma}\label{nbn}
Let 
\[
\frac{f(x)}{g(x)} = \sum_{n \geq 0} b(n) x^n
\]
the power series expansion of $f(x)/g(x)$. Then,
\[
b(n)n = \sum_{r=1}^n \left(\sum_{i=1} ^k \alpha_i \lambda_i^r - \sum_{i=k+1}^l \alpha_i \lambda_i^r \right) b(n-r)
\]
\end{lemma}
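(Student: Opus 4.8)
The plan is to read off the recurrence from the logarithmic derivative of the generating function $F(x) := f(x)/g(x) = \sum_{n \geq 0} b(n) x^n$. Since each factor $(1 - \lambda_i x)$ takes the value $1$ at $x = 0$, both $f$ and $g$ have constant term $1$, so $F$ is a well-defined formal power series (equivalently, a function analytic near $0$) with $F(0) = b(0) = 1$; in particular $F$ is invertible and its logarithmic derivative $F'/F$ makes sense. The hypothesis $\deg f < \deg g$ is not needed for the identity itself, but it guarantees that $f/g$ is genuinely expanded as a power series rather than having a polynomial part.

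First I would compute $F'/F$ using $\log F = \log f - \log g = \sum_{i=k+1}^l \alpha_i \log(1-\lambda_i x) - \sum_{i=1}^k \alpha_i \log(1-\lambda_i x)$. Differentiating termwise and using $\frac{d}{dx}\log(1-\lambda_i x) = -\lambda_i/(1-\lambda_i x)$ gives
\[
\frac{F'(x)}{F(x)} = \sum_{i=1}^k \alpha_i \frac{\lambda_i}{1 - \lambda_i x} - \sum_{i=k+1}^l \alpha_i \frac{\lambda_i}{1 - \lambda_i x}.
\]
Next I would expand each summand as a geometric series, $\frac{\lambda_i}{1 - \lambda_i x} = \sum_{r \geq 1} \lambda_i^r x^{r-1}$, and interchange the (finite) sum over $i$ with the sum over $r$ to obtain
\[
\frac{F'(x)}{F(x)} = \sum_{r \geq 1} c_r\, x^{r-1}, \qquad c_r := \sum_{i=1}^k \alpha_i \lambda_i^r - \sum_{i=k+1}^l \alpha_i \lambda_i^r.
\]

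Finally I would clear the denominator, writing $F'(x) = F(x) \cdot (F'(x)/F(x))$, and compare coefficients. Since $F'(x) = \sum_{n \geq 1} n\,b(n)\, x^{n-1}$, the coefficient of $x^{n-1}$ on the left is $n\,b(n)$, while on the right the Cauchy product of $\sum_{m} b(m) x^m$ with $\sum_{r \geq 1} c_r x^{r-1}$ contributes exactly the pairs with $m + r = n$ and $r \geq 1$. This yields $n\,b(n) = \sum_{r=1}^n c_r\, b(n-r)$, which is the asserted formula. There is no serious obstacle here: the argument is a routine formal-power-series (or analytic) manipulation, and the only points demanding care are the correct sign bookkeeping between the numerator factors (indices $k+1, \dots, l$) and the denominator factors (indices $1, \dots, k$), and the legitimacy of the termwise differentiation and series rearrangement, which is immediate because all the sums over $i$ are finite.
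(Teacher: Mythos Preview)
Your proof is correct. The paper does not actually supply its own proof of this lemma: it states the result and attributes it to Lee \cite{CGC-alg-art-lee92}, so there is nothing in the paper to compare against line by line. Your logarithmic-derivative argument is the standard one for this identity (and is essentially the argument in Lee's paper): writing $F'/F = (\log F)'$ in product form, expanding each $\lambda_i/(1-\lambda_i x)$ as a geometric series, and then reading off the Cauchy product $F' = F\cdot(F'/F)$ coefficientwise. The sign bookkeeping and the observation that $b(0)=1$ are handled correctly, and your remark that the degree hypothesis $\deg f < \deg g$ is irrelevant to the recurrence itself is accurate.
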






\medskip 

Let $\zeta := \zeta_d$ be a primitive $d$th root of unity. Since
\[
\prod_{i=1}^n (1-x^{w_i}) =  \prod_{j \in T_n} (1-\zeta^j x)^n \prod_{j \in T_{n-1}} (1-\zeta^j x)^{n-1} \cdots \prod_{j \in T_1} (1-\zeta^j x) 
\]

for some pairwise disjoint subsets $T_1, \dots, T_n \subseteq \{0,\dots, d-1\}$, we can apply Lemma \ref{nbn} with $g(x) :=\prod_{i=1}^k (1-x^{w_i}) $ and $f(x) = 1$. 
Observing that any $j$ in $\{0,\ldots,d-1\}$ appears in exactly one of the $T_i$'s, say $T_\iota$, and that $\zeta^j$ appears $\iota$ times in the product,  we obtain the following recursive formula for computing $H_R(k)$

\begin{equation}\label{H_R^W recursive}
H_{R}(k) = \frac{1}{k} \sum_{r=1}^k \left[ \sum_{i=1}^n i \left( \sum_{j \in T_i} \zeta^{jr} \right) \right] H_{R}(k-r)
\end{equation}

It follows that if we know $H_R(i)$ for all $i=1, \dots, k-1$, we can easily compute $H_R(k)$ by means of (\ref{H_R^W recursive}). 

\medskip
 
Given an equation $\alpha_1 w_1 + \dots + \alpha_n w_n = k$ the problem of counting the number of non-negative integer solutions $\alpha_1, \dots, \alpha_n$  could be solved also using brute force. But, given $k \in \mathbb{N}$, to compute $H_R^W(k)$ with brute force needs $O(k^n)$ operations, whereas the procedure which we have implemented has a quadratic cost in $k$, in fact it needs $O(n k^2)$ operations (\cite{CGC-alg-art-cabmasc16}).

\medskip

Up to now we have shown how to calculate $P_R^W$. For computing a Hilbert quasi-polynomial $P_{R/I}^W = \{P_0, \dots, P_{d-1}\}$, for any vector $W$ and any homogeneous ideal $I$ of $R$, the procedure computes first $P_R^{W'}$, where $W'$ is obtained by dividing $W$ by $\gcd(w_1, \dots, w_n)$, and then it produces $P_{R/I}^W$ starting from $P_R^{W}$, using the relation between $P_R^{W'}$ and $P_{R/I}^W$ showed in Lemma \ref{PRW}.b. \\

We reduced the complexity of the algorithm presented in \cite{CGC-alg-art-cabmasc16} by exploiting formulas for the 2th and 3rd coefficient of the Hilbert quasi-polynomial of $R$, whenever possible, and by computing the values of the Hilbert function once, instead of $d$ times.
In particular, the considered formulas, which are defined in \cite{CGC-alg-art-cabmasc16}, allow to reduce the dimension of the linear system (\ref{linearsystem}) to be solved in order to compute the Hilbert quasi-polynomials $P_R^W$. We compare experimentaly the difference of speed of the two algoritms for computing $H_R^W$, that in \cite{CGC-alg-art-cabmasc16} and ours, with $R =\mathbb{K}[x_1, \dots, x_n]$ where $n \in \{2, \dots, 5\}$ and $W$ a random vector of weights $w_i \in [1, \dots, 12]$.
The following table summarizes the results obtained using an Intel(R) Core(TM) i3-6100 CPU @ 3.70 GHz processor and 8 GB of RAM. 
\vspace{0.5cm}
\begin{center}
    \begin{tabular}{ | l | l | l | p{5cm} |}
    \hline
    n & W & Old Algorithm Time in ms & New Algorithm Time in ms \\ \hline
    2 & [1,3] & 20 & 0\\ \hline
    3 & [2,5,12] & 3300 & 580 \\ \hline
    4 & [1,4,5,8] & 1080 & 140 \\ \hline
    5 & [2,2,6,9,12] & 5340 & 230 \\ \hline
    \end{tabular}
\end{center}
\vspace{0.5cm}

For $n\geq 6$, we have to choose suitably low weights $w_i$, otherwise we encounter software limitation of Singular. For example, for $n=6$ we set $W=[1,1,1,2,2,9]$, and the old algorithm obtains some \textit{Int-overflow errors}, and then it computes wrongly 4 of the 18 polynomials of $H_R^W$, in 1050 ms, whereas our algorithm computes correctly all the 18 polynomials in 100 ms.

\subsection{Hilbert quasi-polynomials for order domain codes}
To test if a pair $(R/I,\prec_W~)$ satisfies the order domain condition $(C1)$, it suffices to compute a Groebner basis of $I$ w.r.t. $\prec_W$ and, for each polynomial in the basis, to check the two monomials of highest weight. Whereas, for the condition $(C2)$ we would need to study the ideal $I$ or the semigroup $\mathcal{N}_{\prec_W}(I)$. We give an alternative and efficient way to establish if the condition $(C2)$ is respected by $I$ and $\prec_W$. \\

\begin{theorem}
Let $(R/I,\prec_W)$ be as usual. The following are equivalent:
\begin{itemize}
\item[(i)] $I$ and $\prec_W$ satisfy the order domain condition $(C2)$;
\item[(ii)] $H_{R/\bar{I}}(k) \in \{0,1\}$ for all $0 \leq k < \ri(R/\bar{I})$ and each $P_i$ in $P_{R/\bar{I}}^W$ is the constant polynomial 0 or 1.
\end{itemize}
\end{theorem}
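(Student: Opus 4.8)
The plan is to convert the combinatorial condition (C2) into a pointwise bound on the Hilbert function of $R/\bar{I}$, and then to read off that bound separately on the two ranges $0 \le k < \ri(R/\bar{I})$ and $k \ge \ri(R/\bar{I})$. The starting observation is that the staircase $\mathcal{N}_{\prec_W}(I)$ consists exactly of the monomials lying outside $\bar{I} = \mathrm{in}_{\prec_W}(I)$, and, by Macaulay's basis theorem, these standard monomials form a $\mathbb{K}$-basis of $R/\bar{I}$. Since $\bar{I}$ is a monomial ideal, hence $W$-homogeneous, this basis respects the weighted grading, so for every $k \in \N$ one has
\[
H^W_{R/\bar{I}}(k) = \bigl|\{ M \in \mathcal{N}_{\prec_W}(I) : \w(M) = k \}\bigr|.
\]
First I would use this to record the elementary equivalence: (C2), i.e. no two staircase monomials share a weight, holds if and only if $H^W_{R/\bar{I}}(k) \in \{0,1\}$ for every $k \ge 0$. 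The whole theorem then reduces to splitting this condition over all $k$ at the regularity index.

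For the range $0 \le k < \ri(R/\bar{I})$, the requirement $H^W_{R/\bar{I}}(k) \in \{0,1\}$ is literally the first clause of (ii), so there is nothing to prove there. For $k \ge \ri(R/\bar{I})$, the definition of the generalized regularity index gives $H^W_{R/\bar{I}}(k) = P_i(k)$ whenever $k \equiv i \bmod d$, where $P_{R/\bar{I}}^W = \{P_0, \dots, P_{d-1}\}$. Hence the tail condition that $H^W_{R/\bar{I}}(k) \in \{0,1\}$ for all $k \ge \ri(R/\bar{I})$ is equivalent to the statement that $P_i(k) \in \{0,1\}$ for every $i$ and every $k \ge \ri(R/\bar{I})$ with $k \equiv i \bmod d$, and I must match this against the second clause of (ii), namely that each $P_i$ be the constant polynomial $0$ or $1$.

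The only step requiring a genuine argument---and the one I would write out carefully---is the claim that a polynomial taking values in $\{0,1\}$ along an infinite arithmetic progression is forced to be constantly $0$ or constantly $1$. For the nontrivial direction I would invoke the pigeonhole principle: the set $\{ k \ge \ri(R/\bar{I}) : k \equiv i \bmod d \}$ is infinite and is mapped by $P_i$ into the two-element set $\{0,1\}$, so $P_i$ assumes one of these two values at infinitely many integers; as a nonzero polynomial has only finitely many roots, either $P_i$ or $P_i - 1$ vanishes identically, whence $P_i \equiv 0$ or $P_i \equiv 1$. The converse is immediate, since a constant $0$ or $1$ keeps $H^W_{R/\bar{I}}$ inside $\{0,1\}$ past the regularity index. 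Conjoining the two ranges yields (i) $\Leftrightarrow$ (ii).
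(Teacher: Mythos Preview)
Your proof is correct and follows the same approach as the paper: reinterpret (C2) as $H^W_{R/\bar{I}}(k)\in\{0,1\}$ for all $k\ge 0$ via the standard-monomial basis, then split this condition at the regularity index. The paper's proof actually glosses over the step you spell out most carefully---why a $P_i$ taking values in $\{0,1\}$ along an arithmetic progression must be the constant $0$ or $1$---so your version is, if anything, more complete.
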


\begin{proof}
Since $\mathcal{N}_{\prec_W}(I) = \{ \lm(f) \mid f \in R/\bar{I}\}$, then $H_{R/\bar{I}}(k) = \mid \{m \in \mathcal{N}_{\prec_W}(I) \mid \w(m)= k \} \mid$. We observe that condition $(C2)$ can be equivalently formulated
as requiring that for any possible weight there is at most one monomial in $\mathcal{N}_{\prec_W}(I)$ enjoying
that weight. In terms of Hilbert function, this conditions is then equivalent to
\[
H_{R/\bar{I}}(k) \in \{0,1\}\mbox{ for all }k \geq 0.
\]
Recalling that $H_{R/\bar{I}}$ will be eventually equal to the quasi-polynomial $P_{R/\bar{I}}^W = \{ P_0, \dots, P_{d-1}\}$, our assertion follows.
\end{proof}

\begin{remark}
The only requirement that each $P_i$ in $P_{R/\bar{I}}^W$ is the constant polynomial 0 or 1 does not imply $(i)$, as we now show.  Let $R= \mathbb{K}[x_1, x_2]$ with standard grading, and $I = (x_1^2, x_1x_2) \subseteq R$. The Hilbert polynomial is the constant polynomial 1 but $H_{R/I}(1) = 2$, then $I$ does not satisfy $(C2)$. That is because $H_{R/\bar{I}}(k) = P_{R/\bar{I}}^W (k)$ for all $k \geq \ri(R/\bar{I})$, but $P_{R/\bar{I}}^W$ does not give any information for $k < \ri(R/\bar{I})$. \\
\end{remark}

\begin{corollary} \label{cor1}
Let $(R/I,\prec_W)$ be as usual, $\bar{I} =\mathrm{in}(I)$ and $\mathcal{G}$ a Groebner basis for $I$. If 
\begin{itemize}
\item any $g \in \mathcal{G}$ has exactly two monomials of highest weight in its support, and
\item $H_{R/\bar{I}}(k) \in \{0,1\}$ for all $0 \leq k < \ri(R/\bar{I})$ and each $P_i$ in $P_{R/\bar{I}}^W$ is the constant polynomial 0 or 1.
\end{itemize}
then $(R/I,\prec_W)$ is an order domain.
\end{corollary}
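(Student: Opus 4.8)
The plan is to observe that Corollary \ref{cor1} is essentially an immediate consequence of Theorem \ref{theoorderdomain} combined with the equivalence established in the preceding (unnumbered) theorem. The statement merely repackages the two order domain conditions $(C1)$ and $(C2)$ into a form that can be checked computationally. The first hypothesis of the corollary is verbatim condition $(C1)$, so no work is needed there. The substance lies in recognizing that the second hypothesis is exactly condition $(ii)$ of the preceding theorem, which was shown to be equivalent to $(C2)$.

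First I would recall the preceding theorem, which states that $I$ and $\prec_W$ satisfy $(C2)$ if and only if $H_{R/\bar{I}}(k) \in \{0,1\}$ for all $0 \leq k < \ri(R/\bar{I})$ and each $P_i$ in $P_{R/\bar{I}}^W$ is the constant polynomial $0$ or $1$. Since the second bulleted hypothesis of the corollary is precisely this condition, I may invoke the equivalence to conclude that $I$ and $\prec_W$ satisfy $(C2)$. The key point, made explicit in the proof of that theorem, is that the Hilbert function $H_{R/\bar{I}}(k)$ counts exactly the number of monomials in $\mathcal{N}_{\prec_W}(I)$ of weight $k$, so requiring this count to be at most $1$ for every $k \geq 0$ is tautologically the same as $(C2)$; the split into $k < \ri(R/\bar{I})$ and $k \geq \ri(R/\bar{I})$ simply separates the ``transient'' range from the range where the Hilbert function agrees with the quasi-polynomial.

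Next I would assemble the two conclusions. Having the first hypothesis deliver $(C1)$ directly and the second hypothesis deliver $(C2)$ via the theorem, the pair $(R/I, \prec_W)$ satisfies both order domain conditions. I then apply Theorem \ref{theoorderdomain}, whose hypotheses are exactly $(C1)$ and $(C2)$ for a Groebner basis $\mathcal{G}$ of $I$ with respect to the generalized weighted degree ordering $\prec_W$, to conclude that $R/I$ is an order domain with the weight function described there. By the convention fixed immediately after that theorem, this means precisely that $(R/I, \prec_W)$ is an order domain, which is the desired conclusion.

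There is essentially no obstacle here, since the corollary is a straightforward corollary in the literal sense: it combines Theorem \ref{theoorderdomain} with the characterization of $(C2)$. The only point requiring a modicum of care is the bookkeeping around the regularity index, namely confirming that checking $H_{R/\bar{I}}(k) \in \{0,1\}$ on the finite range $0 \leq k < \ri(R/\bar{I})$ together with the constancy of each $P_i$ genuinely covers all $k \geq 0$, including the boundary value $k = \ri(R/\bar{I})$ where the Hilbert function first coincides with the quasi-polynomial; the Remark preceding the corollary, which exhibits $I = (x_1^2, x_1 x_2)$ as a case where dropping the finite-range check fails, underscores that this bookkeeping is exactly what makes the criterion correct, and I would cite it to justify that both parts of the second hypothesis are indispensable.
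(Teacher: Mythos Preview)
Your proposal is correct and matches the paper's approach: the paper states Corollary~\ref{cor1} without proof precisely because it is an immediate consequence of combining Theorem~\ref{theoorderdomain} with the equivalence $(C2)\Leftrightarrow(ii)$ established in the theorem just before the remark. Your identification of the first hypothesis as $(C1)$ verbatim and the second as condition~$(ii)$ of that theorem, followed by an appeal to Theorem~\ref{theoorderdomain}, is exactly the intended (and only reasonable) argument.
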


\begin{example}
Let  $(R/I,\prec_W)$ be as in Example \ref{hermitiancode}. The Hilbert-Poincaré series of $(R/\bar{I}, \prec_W)$ is given by $\HP_{R/\bar{I}}(t) = \frac{1-t^6}{(1-t^2)(1-t^3)}$, then $\ri(R/\bar{I}) = 2$. The Hilbert quasi-polynomial is $P_{R/\bar{I}}= \{ P_0, \dots, P_5\}$, with $P_i = 1$ for all $i=0, \dots, 5$. Since $H_{R/\bar{I}} (k) = P_{R/\bar{I}}(k)$ for all $k \geq 2$, we have only to check $H_{R/\bar{I}}(1)$ which is obviously equal to 0, since $1$ is a gap in the semigroup $\Gamma=<2,3>$.
\end{example}

Now  we are ready to describe the following 
\begin{algorithm}[Check Order Domain] \label{algo1}
\ 
\begin{itemize}
\item Input: $(R/I, \prec_W)$.
\item Output: IsOrderDomain $\in \{$True, False$\}$.
\end{itemize}
\
\begin{enumerate}
\item IsOrderDomain:= False;
\item Compute a Groebner basis $\mathcal{G}$ for $I$;
\item If $\w(\lt(g))=\w(\lt(g-\lt(g)))$ for all $g \in \mathcal{G}$, then \label{C1}
\begin{enumerate}
\item Let $ k_1 := \max \{ \ri(R/\bar{I}), d(n-1)\}$; \label{k1}
\item Compute $H_{R}(k)$ for all $1 \leq k < k_1$;\label{Hk}
\item If $H_{R/\bar{I}}(k) \in \{0,1\}$ for all $1 \leq k < k_1$ and each $P_i \in P_{R/\bar{I}}$ is $0$ or $1$, then \label{Test}
\begin{itemize}
\item[$\ $] IsOrderDomain:=True.
\end{itemize}
\end{enumerate}

\item Return IsOrderDomain. 
\end{enumerate}
\end{algorithm}

We recall the results necessary to the description of the algorithm.
 
\begin{itemize}
 \item Line \ref{C1}. We check the condition $(C1)$ of Corollary \ref{cor1}. If it is satisfied we test the second one.
 \item Line \ref{k1}-\ref{Hk}. We compute the first $k_1$ values of the Hilbert function of the polynomial ring $R$  using Equation (\ref{H_R^W recursive}), where $k_1$ is the maximum between $d(n-1)$, that is the number of interpolation points, and the regularity index, $\ri(R/\bar{I})$, which is known thanks to Proposition \ref{RI}, where $\deg h(t)$ is computed by Singular.
\item Line \ref{Test}. Thanks to Proposition \ref{recoverH} we are able to check if $H_{R/\bar{I}}(k) \in \{0,1\}$ for the values $0 \leq k < \ri(R/\bar{I})$. If the previous test does not fail we can compute, using the algorithm showed in Section \ref{algquasipoly}, the quasi-polynomial of $R/\bar{I}$, completing the test of the second condition described in Corollary \ref{cor1}.
\end{itemize}

We would like to point out an interesting aspect of our approach. For our computation of Hilbert quasi-polynomials it is essential to work in a characteristic-0 field. In our applications, e.g. to coding theory, we can have positive characteristic fields for the order domain. However, the actual field matters only for the computation of the Groebner basis of $I$. Once the Groebner basis has been obtained, the monomials under the Hilbert staircase are the same for any field and so we can consider the leading monomials of the obtained Groebner basis  as if they were on a characteristic-zero field.

\section{Applications to codes from maximal curves} \label{maximal}
A maximal curve over a finite field $\mathbb{F}_q$ is a projective geometrically irreducible non-singular algebraic curve defined over $\mathbb{F}_q$ whose number of $\mathbb{F}_q$-rational points attains
the Hasse-Weil upper bound
\[
q + 1 + 2g\sqrt{q} ;
\]
where $g$ is the genus of the curve. Maximal curves, especially those having large genus with respect to $q$, are known to be very useful in coding theory. In this section, we show some examples of affine-variety codes constructed over maximal curves, which are also order domains.

\begin{example}
Let $\chi\subseteq \mathbb{P}^3$ is a $\mathbb{F}_{49}$-maximal curve of genus $g = 7$ (\cite{CGC-alg-art-fangiupla12})  whose affine plane curve is
\[
 y^{16} = x(x + 1)^6
\]
Let $I = ( y^{16} - x(x + 1)^6 ) \subseteq \mathbb{F}_{49}[x,y]$ and let $\prec_W$ be given by $\w(x) = 16, \w(y)=7$ and $x \prec_{lex} y$. It is easy to verify that $I$ and $\prec_W$ satisfy the order domain condition $(C1)$. Let us check condition $(C2)$. Obviously, $\bar{I} = (y^{16})$. With our algorithm we have computed the Hilbert quasi-polynomial $P_{R/\bar{I}}= \{P_0, \dots, P_{111}\}$,
and each $P_i$ is equal to 1. With Singular we have computed $\HP_{R/\bar{I}}(t)= \frac{1-t^{112}}{(1-t^7)(1-t^{16})}$ and so $\ri(R/\bar{I}) = 90$, which means that we are left with computing $H_{R/\bar{I}}(k)$, with $0 < k < 90$. With our recursive algorithm
we can easily see that all obtained values are in $\{0,1\}$. Then we can conclude that $(R/I, \prec_W)$ is an order domain.  
\end{example}
In the previous example we could avoid to compute the Hilbert function for values less than 90, thanks to the following lemma.
\begin{proposition}\label{purepower}
Let $I \subseteq \mathbb{K}[x_1, \dots, x_n]$ be an ideal such that its initial ideal $\bar{I} \subseteq \mathbb{K}[x_1, \dots, x_{n-1}]$, up to a reordering of variables. If each polynomial in the Hilbert quasi-polynomial of $R/\bar{I}$ is 0 or 1, then $H_{R/\bar{I}}(k) \in \{0,1\}$ for all $k \geq 0$.
\end{proposition}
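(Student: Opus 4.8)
The plan is to exploit the fact that the absent variable $x_n$ behaves like a free polynomial variable modulo $\bar{I}$. This forces $H_{R/\bar{I}}$ to be non-decreasing along the arithmetic progression of step $w_n$, and the eventual agreement with the quasi-polynomial (which by hypothesis only takes the values $0$ and $1$) then propagates the bound $\{0,1\}$ backwards to every degree, in particular to the small degrees below $\ri(R/\bar{I})$ where the remark preceding Corollary \ref{cor1} warns us the quasi-polynomial alone gives no information.

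First I would record the structural consequence of the hypothesis $\bar{I} \subseteq \mathbb{K}[x_1,\dots,x_{n-1}]$. Since $\bar{I}$ is a monomial ideal whose minimal generators do not involve $x_n$, a monomial $M$ lies in the staircase $\mathcal{N}_{\prec_W}(I)$ (equivalently $M \notin \bar{I}$) if and only if $M x_n$ does: if some generator $g$ with $x_n \nmid g$ divided $M x_n$, then already $g \mid M$, whence $M \in \bar{I}$, a contradiction. Consequently the map $M \mapsto M x_n$ is a well-defined injection from the weight-$k$ monomials of $\mathcal{N}_{\prec_W}(I)$ into the weight-$(k+w_n)$ monomials, and therefore
\[
H_{R/\bar{I}}(k) \le H_{R/\bar{I}}(k+w_n) \qquad \text{for all } k \ge 0 .
\]
Equivalently, one may observe that $R/\bar{I} \cong \bigl(\mathbb{K}[x_1,\dots,x_{n-1}]/\bar{I}\bigr)[x_n]$, so that $x_n$ is a non-zerodivisor and multiplication by it is injective on each graded component, giving the same inequality.

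Next I would run the limiting argument. Fix an arbitrary $k_0 \ge 0$. Because $w_n \ge 1$, for $m$ large enough we have $k_0 + m w_n \ge \ri(R/\bar{I})$, and for such indices the Hilbert function coincides with the quasi-polynomial, namely $H_{R/\bar{I}}(k_0 + m w_n) = P_{(k_0 + m w_n)\bmod d}(k_0 + m w_n)$. By hypothesis each $P_i$ is the constant $0$ or $1$, so this value lies in $\{0,1\}$. Combining this with the monotonicity above yields $H_{R/\bar{I}}(k_0) \le H_{R/\bar{I}}(k_0 + m w_n) \le 1$, and since Hilbert-function values are non-negative we conclude $H_{R/\bar{I}}(k_0) \in \{0,1\}$. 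As $k_0$ was arbitrary, this proves the claim. The only delicate point is the monotonicity step: it relies essentially on $x_n$ not appearing in $\bar{I}$, so that it acts as a non-zerodivisor and a single multiplication shifts the weight by exactly $w_n$; once this is in place the remainder is a soft limiting argument requiring no quantitative control on $\ri(R/\bar{I})$.
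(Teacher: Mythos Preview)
Your proof is correct and follows essentially the same approach as the paper: both arguments use that multiplication by $x_n$ sends staircase monomials to staircase monomials (since the generators of $\bar{I}$ do not involve $x_n$), thereby pushing any potential violation $H_{R/\bar{I}}(k)\ge 2$ out to degrees beyond $\ri(R/\bar{I})$ where the quasi-polynomial hypothesis forbids it. The paper phrases this as a contradiction starting from two distinct monomials $m_1,m_2$ of equal weight and multiplying both by $x_n^{\alpha_n}$, whereas you package the same idea as the monotonicity inequality $H_{R/\bar{I}}(k)\le H_{R/\bar{I}}(k+w_n)$; the content is identical.
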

\begin{proof}
Suppose by contradiction $H_{R/\bar{I}}(\tilde{k}) \not \in \{0,1\}$ for some $\tilde{k} \geq 0$, then there exist two distinct monomials $m_1, m_2 \in R/\bar{I}$ with the same weight $\tilde{k}$. For all $\alpha_n \geq 0$ also the monomials $m_1 x_n^{\alpha_n}, m_2x_n^{\alpha_n} \in R/\bar{I}$, and they are distinct with the same weight $\tilde{k}+\alpha_n w_n$. In particular, it holds also for $\tilde{k}+\alpha_n w_n  \geq \ri(R/\bar{I})$, but this contradicts our hypothesis on the Hilbert quasi-polynomial of $R/\bar{I}$.
\end{proof}

\begin{remark}
If $I$ and $\prec_W$ satisfy the hypothesis of Proposition \ref{purepower}, then $(R/I, \prec_W)$ is an order domain. 
\end{remark}

\begin{example}
Let $q>2$ be a prime power. The GK-curve, introduced by Giulietti and Korchmaros in \cite{GCG-alg-art-giulkorch09}, is the curve $\mathcal{C}_3 \subseteq \mathbb{P}^3$ defined over $\mathbb{F}_{q^6}$ by the affine equations
\[
v^{q+1} = u^q + u \quad \text{ and } \quad w^{\frac{q^3+1}{q+1}} = vh(u)
\]
where $h(u) = (u^q + u)^{q-1}-1$. This curve is maximal over $\mathbb{F}_{q^6}$ and it is so far the only known example of a maximal curve which cannot be dominated by the Hermitian curve. It turns out in \cite{CGC-alg-art-garstich10} that $\mathcal{C}_3$ can also be defined by the equations
\[
v^{q+1} = u^q + u \quad \text{ and } \quad w^{\frac{q^3+1}{q+1}} = v^{q^2}-v
\]
We are going to investigate if $\mathcal{C}_3$, with an opportune generalized weighted degree ordering $\prec_W$, defines an order domain. Let $q=3$, and $I=(v^4 - u^3 -u, w^7 - v^9 +v) \in \mathbb{F}_{3^6}[u,v,w]$ with $u \prec_{lex} v \prec_{lex} w$. Obviously, $\mathcal{G} = \{ v^4 - u^3 -u, w^7 - v^9 +v\}$ is a Groebner basis for $I$ and $\bar{I}= (v^4, w^7)$. In order to satisfy condition $(C1)$, we set $W=[28,21,27]$. Note that, up to a constant factor, $W$ is unique. Since $\lcm(28,21,27)=756$, we have computed the Hilbert quasi-polynomial $P_{R/\bar{I}} = \{P_0, \dots, P_{755} \}$ of $(R/\bar{I}, \prec_W)$. Since each $P_i$ turns out to be $1$, thanks to the remark above, we can conclude that $(R/I, \prec_W)$ is an order domain. 
\end{example}

Let us show a last example for which the condition $(C2)$ does not hold.
\begin{example}
Let $\mathcal{R}(\ell)$ be the \textit{Ree curve} defined by
\[
\mathcal{R}(\ell) : 
\begin{cases}
y^{\ell} - y = x^{\ell_0}(x^{\ell} - x) \\
z^{\ell} - z = x^{\ell_0}(y^{\ell} -y)
\end{cases}
\quad \text{with } \ell_0 = 3^r, \; r \geq 0, \; \ell = 3\ell_0^2.
\]
$\mathcal{R}(\ell)$ is $\mathbb{F}_q$-maximal for $q = \ell^6$.
Consider $r=0$ and $\ell_0=1$, then $I = (x^4 - x^2 -y^3 + y, xy^3 -xy -z^3 + z) \in R= \mathbb{F}_{3^6}[x,y,z]$ with $ z \prec_{lex} y \prec_{lex} x$. The set $\mathcal{G}= \{x^4 - x^2 -y^3 + y, xy^3 -xy -z^3 + z\}$ is a Groebner basis for $I$ and $\bar{I} = (x^4, xy^3)$. In order to verify condition $(C1)$, we choose as generalized weighted degree ordering $\prec_W$ that defined by $W = [3,4,5]$. The computation of the Hilbert quasi-polynomial for $(R/\bar{I}, \prec_W)$ gives 60 distinct polynomials of degree 1, it means that $(R/I, \prec_W)$ is not an order domain.  
\end{example}

\section{Conclusions and further comments} \label{concl}

Algorithm \ref{algo1} allows to decide effectively if a quotient ring can be seen as an order domain w.r.t. a generalized weighted degree ordering. It can be applied to coding theory, and it can indeed solve some interesting examples, as  we have shown, and as such we believe it can become a convenient tools for coding theorists working with algebraic geometric codes.

We see at least two paths to follow in order to increase the impact of our approach, as we elaborate below.

\noindent 
An advantage of order domain codes (with respect to more traditional codes over curves) is that we can
      build them on higher dimensions variety, e.g. the surface in Example 51 \cite{CGC-cd-art-AndeGeil}. The higher dimension requires
      generalized weighted degree orderings $\prec_W$, with $W \in (\mathbb{N}_+^r)^n$ and $r\geq 2$, that cannot be tackled by our present theory and algorithms. Therefore, we believe that this extension is natural and worth studying.
    
\noindent      
In addition, our algorithm is well-suited when a given variety has been chosen to build the code. Often it happens that we know an infinite family of varieties that would be ideal to build codes on them, e.g. known families of maximal curves. In this situation we would need to adapt our computational approach to a more theoretical one,
in order to use the core ideas of our algorithms for generating general proofs.

\section*{Acknowledgements} These results are included in the first author's PhD thesis and so she would like to thank her supervisors: the other two authors. The authors would like to thank the referee for valuable suggestions.

\medskip

Received October 2016; revised 16 November 2016.

\medskip


\begin{thebibliography}{99}

\bibitem{CGC-alg-art-fangiupla12}
     S. Fanali, M. Giulietti and I. Platoni,
      \emph{On maximal curves over finite fields of small order},
      Adv. Math. Commun., \textbf{6}, (2012),
      107--120.
      
\bibitem{CGC-alg-book-vasconcelos04}
     W. Vasconcelos,
      \emph{Computational methods in commutative algebra and algebraic geometry},
      Springer Science \& Business Media, \textbf{2}, (2004)
      
\bibitem{CGC-alg-book-kreurob05}
     M. Kreuzer and L. Robbiano,
      \emph{Computational commutative algebra 2},
      Springer Science \& Business Media, \textbf{2}, (2005).
       
     
        \bibitem{CGC-cd-art-AndeGeil}
     H. E. Andersen and O. Geil,
      \emph{{E}valuation {C}odes from {O}rder {D}omain {T}heory},
      Finite Fields Appl., \textbf{14}, (2008),
      92--123.    
      
         \bibitem{CGC-cd-art-GeilPell}
     O. Geil and R. Pellikaan,
      \emph{On the Structure of {O}rder {D}omains},
      Finite Fields Appl., \textbf{8}, (2002),
      369--396.  
      
      \bibitem{CGC-cd-prep-manumaxchiara12}
     C. Marcolla, E. Orsini and M. Sala,
      \emph{Improved decoding of affine-variety codes},
      Journal of Pure and Applied Algebra, \textbf{216.7}, (2012),
      1533--1565.  
      
      
         \bibitem{CGC-cd-art-goppa2}
     V.~D. Goppa,
      \emph{Codes associated with divisors},
      Problem of Inform. Trans., \textbf{13}, (1977),
      22--26.  
      
      \bibitem{CGC-cd-book-Geil08}
     O. Geil,
      \emph{Evaluation codes from an affine-variety codes perspective},
      Advances in algebraic geometry codes, Ser. Coding Theory Cryptol, 
      \textbf{5}, (2008), 153--180.
      
            \bibitem{CGC-alg-art-sylvester1882}
     J.~J. Sylvester,
      \emph{On subvariants, ie semi-invariants to binary quantics of an unlimited order},
      American Journal of Mathematics,
      \textbf{5.1}, (1882), 79--136.
          
      
      \bibitem{CGC-alg-art-glaisher1909}
     J.~W.~L. Glaisher,
      \emph{Formulae for partitions into given elements, derived from {S}ylvester's theorem},
      Quart. J. Math,
      \textbf{40}, (1909), 275--348.
      
        \bibitem{CGC-alg-art-lee92}
     D.~V. Lee,
      \emph{On the power-series expansion of a rational function},
     Acta Arithmetica,
      \textbf{62.3}, (1992), 229--255.
      
         \bibitem{CGC-cd-art-lax}
     J. Fitzgerald and R.~F. Lax,
      \emph{Decoding affine variety codes using {G}r{\"o}bner bases},
    Des. Codes Cryptogr.,
      \textbf{2}, (1998),  147--158.
      
      
	\bibitem{GCG-alg-book-stanley2007}
     R. Stanley,
      \emph{Combinatorics and commutative algebra},
    Springer Science \& Business Media,
      \textbf{41}, (2007).
      
               \bibitem{GCG-alg-art-giulkorch09}
     M. Giulietti and G. Korchm{\'a}ros,
      \emph{A new family of maximal curves over a finite field},
    Mathematische Annalen,
      \textbf{343.1}, (2009),  229--245.
      
      
                  \bibitem{CGC-alg-art-garstich10}
     A. Garcia, C. G{\"u}neri and H. Stichtenoth,
      \emph{A generalization of the {G}iulietti--{K}orchm{\'a}ros maximal curve},
    Advances in Geometry,
      \textbf{10.3}, (2010),  427--434.
      
                        \bibitem{CGC-alg-art-cabmasc16}
     M. Caboara and C. Mascia,
      \emph{A partial characterization of Hilbert quasi-polynomials in the non-standard case},
   arXiv:math/1607.05468, (2016).
   
   
         
                  \bibitem{CGC-cd-inbook-D1}
                  O. Geil, \emph{Algebraic geometry codes from order domains}. In
     M. Sala, T. Mora, L. Perret, S. Sakata and C. Traverso,
      \emph{{G}roebner {B}ases, {C}oding, and {C}ryptography}, RISC Book Series, Springer, (2009), 121--141.
    
\bibitem{CGC-cd-art-matsumoto99}
     R. Matsumoto,
      \emph{Miura's Generalization of One-Point {A}{G} codes is Equivalent to {H}{\o}holdt, van {L}int and {P}ellikaan's generalization},
    IEICE Trans. Fund.,
      \textbf{E82-A.10}, (1999),  2007--2010.
      
\bibitem{CGC-cd-book-T.HPell}
    T. H{\o}holdt, J. van Lint and R. Pellikaan,
      \emph{Algebraic Geometry of Codes}. In
    \emph{Handbook of Coding Theory}, V. S. Pless and W.C. Huffman, (1998), 871--961.
    
 \bibitem{CGC-cd-art-Miura}
     S. Miura,
      \emph{Linear Codes on Affine Algebraic Varieties},
    IEICE Trans. Fundamentals,
      (1996).   
      
       \bibitem{matsumoto2000feng}
     R. Matsumoto and S. Miura,
      \emph{On the Feng-Rao bound for the L-construction of algebraic geometry codes},
    IEICE TRANSACTIONS on Fundamentals of Electronics, Communications and Computer Sciences,
      \textbf{83.5}, (2000),  923--926.   
      
   
   
      
      
\end{thebibliography}
\end{document}